\pgfplotsset{compat=newest}
\newtheorem{remark}[theorem]{Remark}
\tikzset{math3d/.style=
{x={(1cm,-0.03cm)}, y={(0.20cm,0.40cm)}, z={(0.00cm,1cm)}} }
\title{Overresolving in the Laplace domain for convolution quadrature methods}
\author{T. Betcke\thanks{
Department of Mathematics, University College London, UK, {\tt t.betcke@ucl.ac.uk}}.\and
N. Salles\thanks{Department of Mathematics, University College London, UK, {\tt n.salles@ucl.ac.uk}. Timo Betcke and Nicolas Salles are supported by Engineering and Physical Sciences Research Council Grant EP/K03829X/1.}\and
W. \'Smigaj\thanks{Simpleware Ltd, Exeter, UK, {\tt w.smigaj@simpleware.com}.}}
\begin{document}

\maketitle

\begin{abstract}
Convolution quadrature (CQ) methods have enjoyed tremendous interest in recent years as an efficient tool for solving time-domain wave problems in unbounded domains via boundary integral equation techniques.  In this paper we consider CQ type formulations for the parallel space-time evaluation of multistep or stiffly accurate Runge-Kutta rules for the wave equation. In particular, we decouple the number of Laplace domain solves from the number of time steps. This allows to overresolve in the Laplace domain by computing more Laplace domain solutions solutions than there are time steps.
We use techniques from complex approximation theory to analyse the error of the CQ approximation of the underlying time-stepping rule when overresolving in the Laplace domain and show that the performance is intimately linked to the location of the poles of the solution operator.
Several examples using boundary integral equation formulations in the Laplace domain are presented to illustrate the main results. 
\end{abstract}


\begin{keywords}
boundary integral equations, convolution quadrature method
\end{keywords}

\begin{AMS}
35P25, 65M38
\end{AMS}

\markboth{T. Betcke, N. Salles, and W. \'Smigaj}{Spectral estimates of the inverse Z-transform arising in convolution quadrature methods.}


\section{Introduction}
Let $\Omega$ be a bounded domain with boundary $\Gamma$. We consider the wave equation in the exterior $\Omega^{+}:=\mathbb{R}^3\backslash\overline{\Omega}$ given by
\begin{equation}
\begin{cases}
\displaystyle \frac{\partial^2 u}{\partial t^2}(t; x) - c^2 \Delta_xu(t; x) =0, &x \in \Omega^{+}, \\
\displaystyle u(0; x) = \frac{\partial u}{\partial t}(0; x) = 0, &x \in\Omega^{+}, \\
u(t; x) = g(t; x), &x \in \Gamma.
\end{cases} \label{eqAcoustic}
\end{equation}

With the rise in massively parallel computing in recent years it has become important not only to achieve parallelism in space for the solution of \eqref{eqAcoustic}, but also to exploit parallelism in time.
One way to achieve this is by a Fourier or Laplace transform of the wave equation. This allows us to solve for a range of frequencies in parallel and to reassemble the time-solution by an inverse transform. Closely related to this approach are space-time parallel convolution quadrature (CQ) type schemes. Consider a sequence of equally spaced discrete-time approximations
$$
u_{d}(t_0;x), u_{d}(t_1;x),u_{d}(t_2;x),\dots
$$
generated by, e.g., a multistep or Runge-Kutta scheme, such that $u_{d}(t_n;x)\approx u(t_n;x)$ for $n=0,1,\dots$ and $t_n=n\Delta t$. We now apply a Z-transform to this sequence and define the function
$$
U_{d}(z;x) := \sum_{n=0}^\infty u_{d}(t_n;x)z^n.
$$
It turns out (see Section \ref{sec:cq_formulation}) that for each evaluation of $U_{d}(z;x)$ for a given value $z$, we need to solve $m$ modified Helmholtz problems with typically complex wavenumber and known boundary data arising from a Z-transform of $g$. For multistep schemes we have $m = 1$. For Runge-Kutta schemes $m$ is the number of stages of the scheme. The time-stepping values $u_{d}(t_n;x)$ can then be recovered by a simple Cauchy integral as
\begin{equation}
u_{d}(t_n;x) = \frac{1}{2\pi i} \int_{\mathcal{C}} \frac{U(z;x)}{z^{n+1}}dz.
\label{eqCauchy}
\end{equation}
\eqref{eqCauchy} can be efficiently evaluated by a trapezoidal rule with $N_f$ discretisation points, making necessary the solution of $m \times N_f$ independent modified Helmholtz problems to recover the time steps.

Convolution quadrature methods were introduced by Lubich in \cite{lubich1988a,lubich1988b,Lubich:1994iq}. In recent years they have seen tremendous interest for the solution of exterior time-domain scattering problems via boundary integral equation formulations, see e.g. \cite{BanjaiS08,chappell2009,Banjai:2010ij,banjai2012,chappell2011,laliena2009}. The application to Maxwell problems is discussed in \cite{chen2012,ballani2013}. A recent excellent overview of the literature on CQ type methods is also contained in \cite{Banjai:2014ep} and \cite{costabel2004}.

In this paper we take a slightly different approach to CQ methods. We do not consider the overall convergence of convolution quadrature methods to the continuous wave equation, but rather ask the question how well convolution quadrature approximates the time steps $u_d(t_n;x)$ generated by the underlying time-stepping scheme. The crucial approximation here is the evaluation of the Cauchy integral in \eqref{eqCauchy} via a trapezoidal rule. Based on classical analyticity results for the solution operator of the Helmholtz equation in the frequency domain we will give precise error bounds for the approximation of \eqref{eqCauchy} as the number of evaluation frequencies $N_f$ tends to infinity. Moreover, the analysis will show how many frequency evaluations will be at least necessary to obtain an acceptable accuracy. As a byproduct the analysis in this paper will give decay estimates of the time-stepping values $u_d(t_n;x)$ similar in flavour to classical energy decay estimates for the continuous solution $u(t;x)$ of \eqref{eqAcoustic}.

In order to turn this convolution quadrature approach into a numerical method, a solver for the modified Helmholtz equation in the Laplace domain is needed. Here, we focus on boundary integral formulations, as they are most frequently used in the context of CQ methods, and we analyse how the spectral properties of different formulations (e.g. integral equation of the first or second kind, combined formulations) influence the rate of convergence of the trapezoidal rule for \eqref{eqCauchy}.
Other types of solvers in the Laplace domain are possible such as finite elements with a PML condition \cite{Berenger:1994eu,Kim:2009us}, and the type of analysis presented in this paper immediately extends to these formulations.

The paper is structured as follows. In Section \ref{sec:cq_formulation} we give an overview of parallel convolution quadrature methods with a particular focus on the role of the underlying Z-transform. In Section \ref{sec:resonances} we discuss the analyticity of the solution operator in dependence of the exterior resonances of a related Helmholtz problem. This is needed for the convergence analysis in Section \ref{sec:convergence}. In Section \ref{formulationsfreqpb} we turn our attention to boundary integral formulations and discuss the influence of the poles of the solution operators for various integral equation formulations on the convergence results. In Section \ref{secNumRes} we present numerical results, including a precise convergence estimate of the CQ approximation of the underlying time-stepping rule in case of a three dimensional trapping domain. We finish with conclusions in Section \ref{sec:conclusions}.

\section{Convolution quadrature as a Z-transform method}
\label{sec:cq_formulation}

In this section we review the CQ method. The derivation is similar to those given in \cite{BanjaiS08,Banjai:2010ij} but focuses explicitly on the representation in terms of a Z-transform and its inversion via Cauchy integrals. To simplify the presentation we rewrite \eqref{eqAcoustic} as a first order system of the form
\begin{align}
\begin{cases}
\displaystyle  \frac{1}{c} \frac{\partial Y(t ; x)}{\partial t} = \mathcal{L} Y(t ; x), & x \in  \Omega^{+}, \\
Y(0 ; x)=0, & x \in \Omega^{+}, \\
BY(t;x) = F(t;x), & x\in\Gamma,
\end{cases}
\label{eqFirstOrder}
\end{align}
where $\displaystyle Y(t ; x) = \begin{bmatrix}u(t;x)\\ \frac{1}{c} \frac{\partial u}{\partial t}(t;x)\end{bmatrix}$,  $ \mathcal{L} = \begin{bmatrix} 0 & I \\ \Delta_x & 0\end{bmatrix}$, $B=\begin{bmatrix}I & 0 \\ 0 & 0\end{bmatrix}$, and $F(t;x) = \begin{bmatrix}g(t;x) \\ 0 \end{bmatrix}$.

We first write the frequency problems to be solved when applying a multistep scheme, and then we will see how to apply a $m$-stages Runge-Kutta scheme \cite{banjai2011runge,banjai2012runge,banjai2011error}.

\subsection{Multistep schemes}
We start by applying a multistep rule to the first order system \eqref{eqFirstOrder}. The general form of the discrete scheme is then
\begin{equation}
\frac{1}{c\Delta t}\sum_{j=0}^n \gamma_{n-j}Y_{d}(t_j;x) =  \mathcal{L} Y_d(t_n;x).
\label{eqMultistep}
\end{equation}
Here, $Y_{d}(t_n;x)$ is the sequence of discrete approximations to $Y(t_n;x)$ generated by the multistep rule, and the $\gamma_{n-j}$ are the coefficients of the multistep rule. For example, in the case of implicit Euler we have $\gamma_0=1$, $\gamma_1=-1$, and $\gamma_j=0$, $j>1$. For  convenience we will always assume that $\gamma$ is an infinite sequence, where all but a finite number of elements (corresponding to the multistep rule) are zero.

 We want to apply the Z-transform to \eqref{eqMultistep}. We use the following definition for the Z-transform $\mathcal{Z}\{X\}$ of a general sequence $\{X_n\}$ with $n\geq 0$:
\begin{equation}
\mathcal{Z}\{X\}(z) := \sum_{n=0}^\infty X_n z^n,\quad z\in\mathbb{C}.
\label{eqZtransform}
\end{equation}
Hence, the elements of the sequence become the Taylor coefficients of the function $\mathcal{Z}\{X\}(z)$.
The inverse transform is given by a Cauchy integral as
\begin{equation}
X_n := \frac{1}{2\pi i}\int_{\mathcal{C}} \frac{\mathcal{Z}\{X\}(z)}{z^{n+1}}dz,
\label{eqInverseZtransform}
\end{equation}
where $\mathcal{C}$ is a contour around $0$ inside the domain of analyticity of $\mathcal{Z}\{X\}(z)$. Typically, we use a circle of radius $0 < \lambda \leq 1$. The following well knwon result holds for the existence of the Z-transform.
\begin{proposition}
Let $\{X_n\}$ be a sequence with $|X_n|\leq Ce^{-\alpha n}$, $C>0$, $\alpha\in\mathbb{R}$. Then the Z-transform of $\{X_n\}$ exists and $\mathcal{Z}\{X\}(z)$ is analytic inside every closed disk around $0$ with radius $\lambda<e^{\alpha}$.
\end{proposition}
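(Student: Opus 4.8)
The plan is to recognise the Z-transform $\mathcal{Z}\{X\}(z) = \sum_{n=0}^\infty X_n z^n$ as an ordinary power series in $z$ (its coefficients being precisely the sequence elements, as already noted) and to control its convergence directly through the given exponential bound. First I would fix a radius $\lambda < e^{\alpha}$ and, for every $z$ in the closed disk $\{|z|\le\lambda\}$, estimate the general term by $|X_n z^n| \le C e^{-\alpha n} \lambda^n = C(\lambda e^{-\alpha})^n$. Since $\lambda < e^{\alpha}$ forces $q := \lambda e^{-\alpha} < 1$, the majorant $\sum_n C q^n$ is a convergent geometric series that is independent of $z$. By the Weierstrass $M$-test this delivers absolute and uniform convergence of the series on the whole closed disk, which in particular establishes existence of the Z-transform for every $|z| < e^{\alpha}$.

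For the analyticity claim I would then invoke the standard fact that each partial sum $\sum_{n=0}^N X_n z^n$ is a polynomial, hence entire, and that a locally uniform limit of holomorphic functions is again holomorphic (Weierstrass' convergence theorem, or equivalently Morera's theorem applied to the uniform limit). Because the uniform convergence obtained above is available on every closed disk of radius $\lambda < e^{\alpha}$, the limit $\mathcal{Z}\{X\}(z)$ is analytic on each such disk, which is exactly what the proposition asserts. Equivalently, one could phrase the first step as a Cauchy--Hadamard computation, noting $\limsup_n |X_n|^{1/n} \le e^{-\alpha}$ so that the radius of convergence is at least $e^{\alpha}$.

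There is no genuine obstacle here, as this is a classical argument; the only points needing care are bookkeeping rather than substance. One must keep the inequality $\lambda < e^{\alpha}$ strict, so that the closed disk of radius $\lambda$ sits strictly inside the open disk $\{|z| < e^{\alpha}\}$ where the geometric majorant has ratio $q < 1$ and where uniform convergence, and hence analyticity, is guaranteed. It is also worth observing that the argument is insensitive to the sign of $\alpha$: for $\alpha < 0$ the radius $e^{\alpha}$ is merely smaller than one, but the estimate and the Weierstrass argument go through verbatim.
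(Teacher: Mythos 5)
Your proposal is correct and follows essentially the same route as the paper: bounding $|X_n z^n|$ by the geometric majorant $C(\lambda e^{-\alpha})^n$ on the disk $|z|\le\lambda$ with $\lambda<e^\alpha$. You are in fact slightly more complete than the paper, which stops at convergence of the majorant, whereas you explicitly supply the uniform-convergence/Weierstrass step that upgrades convergence to analyticity.
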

\begin{proof}
Let $|z| = \lambda.$ Then
\begin{equation*}
\left\vert \mathcal{Z}\{X\}(x) \right\vert \leq \sum_{n=0}^{\infty} \left\vert X_n z^n \right \vert \leq C \sum_{n=0}^{\infty} \left(  \lambda e^{- \alpha} \right)^{n},
\end{equation*}
which converges if $\lambda < e^{\alpha}$.
\end{proof}

Hence, if the sequence $\{X_n\}$ decays exponentially, then $\mathcal{Z}\{X\}(z)$ is analytic within a disk of radius $\lambda>1$. On the other hand, if the sequence is only bounded or grows exponentially, we require $\lambda <1$.

The Z-transform of \eqref{eqMultistep} is given by
\begin{equation}
\frac{1}{c\Delta t}\sum_{n=0}^\infty\left[\sum_{j=0}^n \gamma_{n-j}Y_{d}(t_j;x)\right]z^n =  \mathcal{L}\sum_{n=0}^\infty Y_{d}(t_n;x)z^n.
\label{eqZtransformYeq}
\end{equation}
Define $\gamma(z)=\sum_{n=0}^\infty\gamma_n z^n$ and $Y_d(z;x)=\sum_{n=0}^\infty Y_d(t_n;x)z^n$. Then, the left-hand side of \eqref{eqZtransformYeq} is a convolution of the Taylor coefficients of $\gamma(z)$ and $Y_d(z;x)$. \eqref{eqZtransformYeq} is therefore equivalent to
$$
\frac{1}{c\Delta t}\gamma(z)Y_{d}(z;x) =  \mathcal{L} Y_{d}(z;x).
$$
Translating into a second order form, we obtain the modified Helmholtz problem
\begin{align}
\begin{cases}
\displaystyle  \left(\frac{\gamma(z)}{c \Delta t}\right)^2 U_d(z;x)-\Delta_x U_d(z;x) = 0,  & x \in  \Omega^{+}, \\
U_d(z;x) = G(z;x), &x \in \Gamma,
\end{cases}
\label{eqModifiedHelmholtz}
\end{align}
where $U_d(z;x)=\sum_{n=0}^\infty u_d(t_n;x)z^n$ and $G(z;x)=\sum_{n=0}^\infty g(t_n;x)z^n$. We still need to define suitable boundary conditions towards infinity. Consider a sphere $\mathcal{S}$ of radius $r_0>0$ surrounding the domain $\Omega$. Then in the exterior of $\mathcal{S}$ the solution of \eqref{eqAcoustic} is outgoing, and the appropriate boundary conditions for \eqref{eqModifiedHelmholtz} are outgoing boundary conditions. For a general Helmholtz problem of the form
\begin{equation}
\label{eq:Helmholtz}
\Delta v(x)+k^2v(x)=0,~x\in\Omega^{+}
\end{equation}
with possibly complex wavenumber $k$, outgoing boundary conditions can be defined by requiring that $v$ can be expanded into a series of the form
\begin{equation}
\label{eq:outgoing}
v(x)=\sum_{n=0}^\infty\sum_{m=-n}^n a_{n,m}h_n^{(1)}(kr)Y_n^{m}(\hat{x})
\end{equation}
for $r=|x|> r_0$. Here, $\hat{x}=x/|x|$, $h_n^{(1)}$ is a spherical Hankel function of the first kind, the $Y_n^m$ are spherical harmonics, and the $a_{n,m}$ are expansion coefficients of the solution $v$ \cite{Kim:2009us}.
In the case of a real wavenumber $k$ \eqref{eq:outgoing} is equivalent to the Sommerfeld radiation condition $\lim_{|x|\rightarrow\infty}|x|\left(\frac{\partial}{\partial x}-ik\right)v(x)=0$, and generalises the Sommerfeld radiation condition to arbitrary complex wavenumbers.

Hence, suitable conditions towards infinity of \eqref{eqModifiedHelmholtz} are given by \eqref{eq:outgoing} with wavenumber
$k:=k_z$, where
\begin{equation}
\label{eq:k_z}
k_z:=i\left(\frac{\gamma(z)}{c\Delta t}\right).
\end{equation}

For each given $z$ we can now evaluate $U_d(z;x)$ by solving the boundary value problem \eqref{eqModifiedHelmholtz} together with outgoing boundary conditions specified above. Once we have computed $U_d(z;x)$, the time-stepping values $u_d(t_n;x)$ are obtained by applying the inverse Z-transform (\ref{eqInverseZtransform}) as
\begin{align}
u_d(t_n;x) = \frac{1}{2\pi i}\int_{|z|=\lambda}\frac{U_d(z;x)}{z^{n+1}}dz, \label{eqInverseZtransformLambda}
\end{align}
where we integrate over a circle around the origin with radius $\lambda$. In order to turn this into a numerical method we need to approximate this contour integral. The natural choice is the trapezoidal rule, since it converges exponentially in the number $N_f$ of integration points for periodic analytic functions. Let $z_k = \lambda e^{2\pi i\frac{k}{N_f}}$ with $k=1,\dots,N_f$.  The trapezoidal rule applied to the above contour integral gives
\begin{equation}
u_d(t_n;x)\approx u_d^{N_f}(t_n;x):=\frac{1}{N_f}\sum_{k=1}^{N_f} \frac{U_d(z_k;x)}{z_k^n}. \label{eqInverseZtransformTrapz}
\end{equation}
Using the fact that $U_d(\overline{z} ; x) = \overline{U_d(z ; x)}$, we do not need to solve problem (\ref{eqModifiedHelmholtz}) for $N_f$ different frequencies but only for half the frequencies (see \cite[Section 4.1]{BanjaiS08}).
 Furthermore, the Z-transform of the boundary data and the inverse Z-transform of the solution $U_d$ can be efficiently evaluated via FFT.

We can summarize the multistep convolution quadrature method in three steps:
\begin{enumerate}
\item Compute $\omega_j = \gamma(z_j)/(c\Delta t)$ for equally distributed points $z_j$ located on the circle with radius $\lambda$ used as contour for the inverse Z-transform to get the wavenumbers for the modified Helmholtz problem.
\item For each wavenumber $\omega_j$, approximate the solution of problem \eqref{eqModifiedHelmholtz} using a boundary integral equation formulation or other method.
\item Perform the inverse Z-transform using \eqref{eqInverseZtransformTrapz} to evaluate the time-domain solution.
\end{enumerate}

\subsection{Runge-Kutta schemes}
In order to apply a $m$-stages Runge-Kutta method to the first order system \eqref{eqFirstOrder}, we introduce the internal stages $\left(V_i\right)_{i=1\ldots m}$. A Runge-Kutta method is defined by the matrix $A = (a_{i,j})_{1 \leq i,j \leq m}$ and the two vectors $b=(b_i)_{1 \leq j \leq m}$ and $c=(c_j)_{1 \leq j \leq m}$ (see Appendix \ref{appendix_rungekutta}). The general form of the discrete scheme applied to \eqref{eqFirstOrder} is then \cite{banjai2011runge}
\begin{align}
\label{RungeKuttaSystem}\begin{cases}
\displaystyle V_{i}(t_n ; x) \displaystyle =   Y_d(t_n ; x) + c \Delta t \sum_{j = 1}^{m} a_{i,j}  \mathcal{L} V_{j}(t_n ; x)& \text{ for } i \in \left\{1, \ldots, m \right\}, \\
\displaystyle Y_d(t_{n+1} ; x ) \displaystyle =  Y_d(t_n ; x) + c \Delta t \sum_{j =1}^m b_j  \mathcal{L} V_{j}(t_n ; x),
\end{cases}
\end{align}
where $\mathcal{L} = \begin{bmatrix} 0 & I \\ \Delta_x & 0\end{bmatrix}$. The third vector, $\left(c_j\right)_{j=1 \ldots m}$, that characterises the Runge-Kutta scheme does not appear at this stage; it will appear later for the evaluation of the right-hand side, see \eqref{eqRKboundarycondition}.

By applying the Z-transform to \eqref{RungeKuttaSystem}, one has
\begin{align}
\label{RungeKuttaSystemZT}\begin{cases}
\displaystyle V_i(z ; x) =   Y_d(z ; x) + c \Delta t  \sum_{j=1}^m a_{i,j}  \mathcal{L} V_j(z ; x), \\
\displaystyle z^{-1} Y_d(z ; x) =   Y_d(z ; x) + c \Delta t \sum_{j = 1}^m b_j  \mathcal{L} V_j(z ; x).
\end{cases}
\end{align}

We have to point out that by using a stiffly accurate Runge-Kutta scheme (that means $a_{m,j} = b_j, j \in \{1, \ldots, m\}$), from \eqref{RungeKuttaSystem} we obtain the equality
\begin{align}
V_m(z; x)  = z^{-1} Y_d(z ; x). \label{laststagetosolution}
\end{align}
From the second expression of \eqref{RungeKuttaSystemZT}, we get
\begin{align}
Y_d(z ; x) = \frac{z}{1-z} c \Delta t \sum_{j = 1}^m b_j  \mathcal{L} V_j(z ; x) \quad \text{for } |z| < 1, z \in \mathbb{C},
\end{align}
which can be used in the first expression of \eqref{RungeKuttaSystemZT},
\begin{align}
V_i(z ; x) = c \Delta t \sum_{j = 1}^m \left(\frac{z}{1-z} b_j + a_{i,j}\right)  \mathcal{L} V_j(z ; x), \quad \text{for } |z| < 1, i = 1 \ldots m.
\end{align}
Taking into account the fact that $V_j$ can be decomposed as  $V_j(z ;x) = \left[ R_j(z ;x), S_j(z ; x)\right]^t$ and $\mathcal{L} = \begin{bmatrix} 0 & I \\ \Delta_x & 0\end{bmatrix}$, we obtain a system of equations of the second order:
\begin{align}
\begin{cases}
\displaystyle R_i(z ;x) = c \Delta t \sum_{j = 1}^m \left(\frac{z}{1-z} b_j + a_{i,j}\right) S_j(z ; x), \\
\displaystyle  S_j(z ; x) =c \Delta t \sum_{\ell = 1}^m \left(\frac{z}{1-z} b_{\ell} + a_{j,{\ell}}\right) \Delta_x R_{\ell}(z ; x).
\end{cases} \notag
\end{align}

By introducing $\mathcal{R}(z;x) = \left( R_1(z;x), R_2(z;x), \ldots, R_m(z;x)\right)$, we can write
\begin{align}
\left( \frac{\Delta(z)}{c \Delta t} \right)^2 \mathcal{R}(z ;x) = \Delta_x \mathcal{R}(z ;x), \label{eqmhVectorR}
\end{align}
where
\begin{align}
\Delta(z) = \left( A + \frac{z}{1 -z} \mathbbm{1} b^t\right)^{-1} \label{eqDefDelta}
\end{align}
with $\mathbbm{1} = \left(1, \ldots, 1\right)^{t} \in \mathbb{R}^m$.

We have to diagonalize $\Delta(z)$ in order to decouple the system of
equations and be able to apply a boundary element method. We assume
for the radius $\lambda$ of the integration contour that $\lambda <
1$. In this case $\Delta(z)$ always exists
\cite{Banjai:2010ij}. However, $\Delta(z)$ may not be diagonalizable
for certain values of $z$ within the unit disk. For example, in the
case of Radau IIa this occurs for $z=3\sqrt{3}-5$ (see
\cite[Prop. $3.4$]{Banjai:2010ij} or Appendix
\ref{appendix_rungekutta}). In Section
\ref{subsec:analyticityrk} we discuss this case in more detail.

Let $\mathbb{P}(z)$ be the matrix of eigenvectors of $\Delta(z)$ and
$\mathbb{D}(z)$ the diagonal matrix containing the associated
eigenvalues such that
\begin{align}
\Delta(z) = \mathbb{P}(z) \mathbb{D}(z) \mathbb{P}^{-1}(z), \text{ and } \mathbb{D}(z) = \mbox{diag}\left(\gamma_1(z), \ldots, \gamma_m(z)\right). \label{eqRKdiagonalised}
\end{align}
Then we get the independent equations
\begin{align}
\left( \frac{\gamma_j(z)}{c \Delta t} \right)^2 W_j(z ;x) = \Delta_x W_j(z ;x) \label{eqRKWj}
\end{align}
with  $\displaystyle W_j = \sum_{\ell = 1}^{m} \left(\mathbb{P}^{-1}(z)\right)_{j,\ell} R_{\ell}(z;x)$.

We still need to define the boundary conditions for the frequency problems.
Since $V_{i}(t_n ; x)$ in \eqref{RungeKuttaSystem} is an internal stage, we have the boundary condition (see \cite[subsection $2.2$]{schadleLL06} and \cite[section $2$]{Banjai:2014ep} for example)
\begin{align}
B V_{j}(t_n ; x) = F(t_n + c_j \Delta t, x), \quad x \in \Gamma, \label{eqRKboundarycondition}
\end{align}
where $c_j$ is the $j$th coefficient of the vector $c$ that defines the Runge-Kutta scheme and $F(t;x) = \begin{bmatrix}g(t;x) \\ 0 \end{bmatrix}$ with $g$ the Dirichlet data of the acoustic problem.

Taking into account \eqref{eqRKboundarycondition} and applying the Z-transform, one has the following boundary condition for $R_j$:
\begin{align}
R_j(z; x) = G_{j}(z;x) := \sum_{n \geq 0} g(t_n + c_j \Delta t; x) z^{n}, \quad x \in \Gamma. \label{eq:defGj}
\end{align}
Finally, the boundary condition for equation \eqref{eqRKWj} writes as
\begin{align}
W_{\ell}(z;x) =  \sum_{j = 1}^{m} \left(\mathbb{P}^{-1}(z)\right)_{\ell, j} G_{j}(z;x), \quad x \in \Gamma. \label{eq:boundaryConditionRK}
\end{align}
The frequency problems to solve are:
\begin{align}
\label{eq:rkfrequencyproblems}
\begin{cases}
\displaystyle \left( \frac{\gamma_i(z)}{c \Delta t} \right)^2 W_j(z ;x) = \Delta_x W_j(z ;x), &x \in \Omega^{+}, \\
\displaystyle W_j(z;x) =   \sum_{\ell = 1}^{m} \left(\mathbb{P}^{-1}(z)\right)_{j, \ell} G_{\ell}(z;x), & x \in \Gamma,
\end{cases}
\end{align}
with radiation conditions for $W_j$ at infinity (see
\eqref{eq:outgoing} and \cite{Kim:2009us}). If we use a stiffly accurate
Runga-Kutta scheme \eqref{laststagetosolution}, we now obtain
\begin{align}
U_d(z;x) = z R_m(z;x) = z \sum_{j = 1}^{m} \left(\mathbb{P}(z)\right)_{m,j} W_j(z;x), \label{eq:RKComputationUd} 
\end{align}
and formulas \eqref{eqInverseZtransformLambda} and \eqref{eqInverseZtransformTrapz} provide respectively the solution of our problem and its approximation by trapezoidal rule.


\section{Scattering poles and analyticity of the Laplace domain problem}
\label{sec:resonances}

Crucial for the analysis of the CQ method presented in Section \ref{sec:cq_formulation} is the analyticity of $U_d(z;x)$ with respect to $z\in\mathbb{C}$. Consider the Helmholtz equation \eqref{eq:Helmholtz} with outgoing boundary data \eqref{eq:outgoing} and given Dirichlet boundary conditions $v=g$ on $\Gamma$.

Then one can define the solution operator $\mathcal{B}(k)$, which maps the boundary data $g$ into a solution $v$ of the associated Helmholtz problem with Dirichlet boundary data. The following result holds for the analyticity of $\mathcal{B}$ (see e.g.  \cite[Section $9.7$, Corollary $7.5$]{taylor2010}).
\begin{theorem}
\label{thm:meromorphic}
The solution operator $\mathcal{B}$ is a meromorphic operator-valued function of $k$. The poles $p_j$, $j=1,2,\dots$ of $\mathcal{B}(k)$ are located in the lower half of the complex plane, that is $\text{Im}\{p_j\}<0$ for all $j$.
\end{theorem}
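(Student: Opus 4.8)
The plan is to reformulate the exterior Dirichlet problem as a boundary integral equation on $\Gamma$ and then invoke the analytic (meromorphic) Fredholm theorem. I would represent the solution of \eqref{eq:Helmholtz} with outgoing conditions \eqref{eq:outgoing} and data $v=g$ on $\Gamma$ by a combined single- and double-layer ansatz
\begin{equation*}
v(x) = \int_\Gamma \left( \frac{\partial \Phi_k(x,y)}{\partial n(y)} - i\eta\, \Phi_k(x,y)\right)\phi(y)\,ds(y), \qquad \Phi_k(x,y) = \frac{e^{ik|x-y|}}{4\pi|x-y|},
\end{equation*}
with a fixed coupling parameter $\eta>0$. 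Taking the Dirichlet trace yields an operator equation $\mathcal{A}(k)\phi = g$, where $\mathcal{A}(k) = \tfrac12 I + K(k) - i\eta V(k)$ is the combined-field operator, and $\mathcal{B}(k)$ is recovered by composing $\mathcal{A}(k)^{-1}$ with the (entire in $k$) representation formula. Since $\Phi_k$ is entire in $k$, the layer potentials $V(k)$ and $K(k)$ form an analytic family of operators on the relevant Sobolev space over $\Gamma$, and $\mathcal{A}(k)$ is a compact perturbation of $\tfrac12 I$, hence Fredholm of index zero and analytic in $k$.

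For the meromorphy I would apply the analytic Fredholm theorem: if $\mathcal{A}(k)$ is an analytic family of index-zero Fredholm operators on a connected domain and $\mathcal{A}(k_0)$ is invertible for at least one $k_0$, then $k\mapsto\mathcal{A}(k)^{-1}$ is meromorphic with poles confined to a discrete set. Composing with the entire representation map then shows that $\mathcal{B}(k)$ is meromorphic, which is the first assertion.

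To locate the poles I would show that $\mathcal{A}(k)$ is invertible throughout the closed upper half-plane $\{\operatorname{Im} k \ge 0\}$. For $\operatorname{Im} k>0$, uniqueness of the homogeneous exterior problem follows from an energy identity: integrating $\bar v(\Delta v + k^2 v)=0$ over $\Omega^+\cap B_R$ and using the exponential decay of outgoing solutions (so the boundary term on $\partial B_R$ vanishes as $R\to\infty$, while the term on $\Gamma$ vanishes because $v=0$ there) gives $k^2 \int|v|^2 = \int|\nabla v|^2 \ge 0$. Since $\operatorname{Im}(k^2)=2\operatorname{Re}(k)\operatorname{Im}(k)\neq 0$ whenever $\operatorname{Re} k\neq 0$, while $k=i\tau$ with $\tau>0$ forces $k^2=-\tau^2<0$, in either case $v\equiv 0$. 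On the real axis, uniqueness of the radiating exterior problem is Rellich's lemma together with unique continuation, and the coupling $\eta\neq 0$ in the combined-field ansatz removes the spurious interior Dirichlet eigenvalues, so $\mathcal{A}(k)$ is invertible for every real $k$ as well. Invertibility on $\{\operatorname{Im} k\ge 0\}$ means $\mathcal{A}(k)^{-1}$ has no poles there, hence every pole $p_j$ satisfies $\operatorname{Im} p_j<0$.

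The main obstacle is the functional-analytic bookkeeping on the real axis rather than the complex-$k$ estimate: one must establish that the combined-field operator is genuinely invertible (not merely injective) for all real $k$, combining the Fredholm alternative with the removal of interior resonances by the coupling term and with Rellich's uniqueness theorem, and one must set up consistent trace spaces so that analyticity, compactness, and index-zero Fredholmness hold simultaneously. The energy argument giving absence of poles for $\operatorname{Im} k>0$ is elementary by comparison; the delicate part is excluding poles on the real axis.
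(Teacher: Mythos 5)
The paper does not actually prove Theorem \ref{thm:meromorphic}; it cites it from Taylor, so your boundary-integral route is necessarily different, and the first half of it --- meromorphy of $\mathcal{B}(k)$ via analyticity of the layer potentials in $k$ (the 3D kernel $e^{ik|x-y|}/(4\pi|x-y|)$ is entire), Fredholmness of index zero, invertibility at one point, and the analytic Fredholm theorem --- is a sound and standard way to obtain the meromorphic continuation. The gap is in the second half. You assert that $\mathcal{A}(k)=\tfrac12 I+K(k)-i\eta V(k)$ is invertible throughout $\{\mathrm{Im}\,k\ge 0\}$ and justify this for $\mathrm{Im}\,k>0$ solely by uniqueness of the exterior Dirichlet problem. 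That is not sufficient: if $\mathcal{A}(k)\phi=0$, exterior uniqueness gives that the exterior field vanishes, but the jump relations then show that the \emph{interior} field $w$ generated by the same densities satisfies the impedance condition $\partial_n w - i\eta w=0$ on $\Gamma$, and $\phi$ is recovered from the interior Cauchy data. So $\ker\mathcal{A}(k)$ is nontrivial at every interior impedance eigenvalue, regardless of exterior uniqueness. Your energy identity for that interior problem reads $-\mathrm{Im}(k^2)\,\|w\|^2_{L^2(\Omega)}=\eta\,\|w\|^2_{L^2(\Gamma)}$, which forces $w=0$ only when $\mathrm{Re}\,k\cdot\mathrm{Im}\,k\ge 0$; in the open quadrant $\{\mathrm{Re}\,k<0,\ \mathrm{Im}\,k>0\}$ both sides can be positive and spurious eigenvalues do occur there for a fixed coupling constant. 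This is precisely the phenomenon the paper itself records in Theorem \ref{thm:combinedFormulationSolvability} and Figure \ref{figPolesCLc}: a fixed purely imaginary coupling confines the spurious impedance resonances to two opposite quadrants, not to the lower half-plane. So as written your argument leaves open the possibility of poles of $\mathcal{A}(k)^{-1}$ with $\mathrm{Im}\,k>0$.

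The conclusion of the theorem is still true because those spurious singularities are poles of $\mathcal{A}(k)^{-1}$, not of $\mathcal{B}(k)$, but that requires an argument you have not supplied. Two standard repairs: (i) run the construction with two different couplings (e.g.\ $-i\eta$ and $+i\eta$, or the pure single-layer ansatz as a second representation) whose spurious sets are disjoint; each yields a meromorphic continuation of $\mathcal{B}$, the two agree on the open set where both are regular (by exterior uniqueness for $\mathrm{Im}\,k>0$ and analytic continuation), hence any point that is regular for one representation is a removable singularity for the other, and every point of $\{\mathrm{Im}\,k\ge 0\}$ is regular for at least one choice; or (ii) show directly that at a spurious resonance the residue of $\mathcal{A}(k)^{-1}$ is annihilated by the exterior representation formula. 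A related smaller point: on the negative real axis the condition \eqref{eq:outgoing} with $h_n^{(1)}(kr)$ is no longer ``radiating'' in the sense needed for Rellich's lemma, so the real-axis case also needs to be phrased in terms of the boundary values of the resolvent from $\mathrm{Im}\,k>0$ rather than a direct appeal to Rellich for all real $k$.
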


At the poles $p_j$ the solution operator $\mathcal{B}$ loses injectivity, and there exist exponentially growing outgoing waves that satisfy zero Dirichlet boundary conditions on $\Gamma$. These poles are also called \emph{scattering poles} associated with the Helmholtz problem.

\subsection{Analyticity of multistep schemes}
\label{subsec::anamultistep}
Now consider the solution operator $\mathcal{B}_U(z)$ associated with the modified Helmholtz problem \eqref{eqModifiedHelmholtz} that maps boundary data $G(z;x)$ into the solution $U_d(z;x)$. It follows that $U_d(z;x)=\mathcal{B}(k_z)G(z;x)=\mathcal{B}_U(z)G(z;x)$ with $k_z$ as defined in \eqref{eq:k_z}.
Hence, $\mathcal{B}_U(z)=\mathcal{B}(k_z)$, and from the analyticity of $\mathcal{B}$ with respect to $k_z$ it follows that $\mathcal{B}_U(z)$ is analytic with respect to $z$, since $k_z$ is a polynomial in $z$. We therefore obtain the following result.
\begin{theorem}
\label{thm:bmeromorphic}
The solution operator $\mathcal{B}_U(z)$ is a meromorphic function of $z$. It can only have singularities at values $z_j$ satisfying $p_j=i\left(\frac{\gamma(z_j)}{c\Delta t}\right)$.
\end{theorem}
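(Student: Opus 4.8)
The plan is to exploit the factorisation $\mathcal{B}_U(z)=\mathcal{B}(k_z)$ recorded just above the statement, together with two facts: by Theorem~\ref{thm:meromorphic} the operator $\mathcal{B}$ is meromorphic in $k$ with poles $\{p_j\}$, and by \eqref{eq:k_z} the map $z\mapsto k_z=i\gamma(z)/(c\Delta t)$ is a non-constant polynomial, hence an entire function of $z$. The statement thus reduces to the general principle that the composition of a meromorphic operator-valued function with an entire function is again meromorphic, with poles lying precisely over the poles of the outer function. I would organise this into an analytic part, a local Laurent analysis, and a discreteness argument.

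First set $\mathcal{S}:=\{z\in\mathbb{C}:k_z=p_j\text{ for some }j\}$. If $z_0\notin\mathcal{S}$ then $\mathcal{B}$ is analytic in a neighbourhood of $k_{z_0}$, and since $z\mapsto k_z$ is analytic, substituting one norm-convergent power series into another shows that $\mathcal{B}_U=\mathcal{B}(k_z)$ is analytic at $z_0$; hence $\mathcal{B}_U$ is analytic on $\mathbb{C}\setminus\mathcal{S}$. For $z_0\in\mathcal{S}$ with $k_{z_0}=p_j$, Theorem~\ref{thm:meromorphic} supplies a Laurent expansion with finite principal part,
\begin{equation}
\mathcal{B}(k)=\sum_{\ell=1}^{N}\frac{A_{-\ell}}{(k-p_j)^{\ell}}+\mathcal{H}(k),
\end{equation}
with bounded operators $A_{-\ell}$ and $\mathcal{H}$ analytic near $p_j$. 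Because $k_z$ is a non-constant polynomial, $k_z-p_j=k_z-k_{z_0}$ vanishes at $z_0$ to a finite order $\mu\geq 1$, so $k_z-k_{z_0}=(z-z_0)^{\mu}h(z)$ with $h$ analytic and $h(z_0)\neq 0$. Substituting, each term $A_{-\ell}(k_z-p_j)^{-\ell}$ becomes a pole of order $\ell\mu$ at $z_0$, while $\mathcal{H}(k_z)$ is analytic there. Thus $\mathcal{B}_U$ has a genuine pole (of order at most $N\mu$) at $z_0$, never an essential singularity.

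It remains to see that $\mathcal{S}$ is locally finite, which upgrades ``analytic off $\mathcal{S}$ with poles on $\mathcal{S}$'' to meromorphy in the usual sense. Let $K\subset\mathbb{C}$ be compact. Then $\{k_z:z\in K\}$ is compact, so only finitely many of the discrete poles $p_j$ lie in it; and for each such $p_j$ the equation $k_z=p_j$, equivalently $\gamma(z)=-ic\Delta t\,p_j$, is a polynomial equation with finitely many roots, so $\{z\in K:k_z=p_j\}$ is finite. Hence $\mathcal{S}\cap K$ is finite, $\mathcal{S}$ is discrete, and combining with the previous step $\mathcal{B}_U$ is meromorphic with singularities contained in $\mathcal{S}=\{z_j:p_j=i\gamma(z_j)/(c\Delta t)\}$, as claimed.

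The operator-valued substitution statements used above are routine; the only place where the structure of the problem genuinely enters is the non-constancy of $\gamma$, which simultaneously guarantees that the inner map has finite-order zeros (excluding essential singularities in the Laurent step) and that each fibre $\{z:k_z=p_j\}$ is finite (giving discreteness). I therefore expect the main obstacle to be purely bookkeeping: verifying that the bounded coefficients $A_{-\ell}$ and the analytic remainder $\mathcal{H}$ transform correctly under the change of variable $k=k_z$, which requires no analytic input beyond Theorem~\ref{thm:meromorphic}.
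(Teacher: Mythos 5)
Your proposal is correct and follows essentially the same route as the paper, which establishes the result in one line by noting $\mathcal{B}_U(z)=\mathcal{B}(k_z)$ with $\mathcal{B}$ meromorphic in $k$ (Theorem~\ref{thm:meromorphic}) and $k_z$ a polynomial in $z$. You merely supply the routine details (local Laurent analysis and discreteness of the preimage set) that the paper leaves implicit, and these are all sound.
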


It follows that $\mathcal{B}_U$ is an analytic function of $z$ in the interior of the disk with radius $\lambda_{\mathcal{B}}$ defined by
\begin{align}
\lambda_{\mathcal{B}} := \min_j \{|z_j|\}. \label{eqDefLambdaB}
\end{align}

Analyticity of the solution operator $\mathcal{B}_{U}$ alone does not guarantee analyticity of $U_d(z;x)$. Since $U_d(z;x)=\mathcal{B}_U(z)G(z;x)$ the radius of analyticity of the boundary data $G$ is crucial. Remember that $G(z;x)=\sum_{n=0}^\infty g(t_n)z^n$. Hence, the radius of analyticity $G$ depends on the rate of decay of the time data $g(t_n;x)$.

Consider boundary data given in the form
$$
g(t_n;x) = e^{-\beta t}\sin^5(2t) f(x)
$$
for some sufficiently smooth function $f$ on $\Gamma$ and $\beta>0$ (see e.g. \cite[Section 6.1]{Banjai:2010ij}). Then the radius $\lambda_G$ of analyticity of $G(z;x)$ is determined by the requirement that
$$
|\mathcal{Z}\left\{g(\cdot;x)\right\}(z)|\leq \sum_{n=0}^\infty e^{-\beta n\Delta t}|f(x)|\lambda_G^n<\infty,
$$
and therefore $\lambda_G<e^{\beta \Delta t}$. Hence, as $\Delta t\rightarrow 0$ the radius of analyticity becomes effectively $\lambda_G=1$.

Another example is an incident wave $u_i$ defined by a Gaussian beam of the form
$$
u_i(t;x)=\cos\left(2\pi\left(t-\frac{d\cdot x}{c}\right)f\right)e^{-\frac{\left(t-t_p-\frac{d\cdot x}{c}\right)^2}{2\sigma^2}},
$$
and $g(t;x) := -u_i(t;x)$. The values $g(t_n;x)$ now decay super-exponentially as $n\rightarrow\infty$. It follows that the associated function $G(z;x)$ is an entire function with $\lambda_G=\infty$.

We note that the above Gaussian beam does not satisfy the initial condition of \eqref{eqAcoustic} for $t=0$, introducing a weak singularity in the solution. In practice this is not relevant if the beam starts sufficiently far away from the obstacle, and therefore the size of the boundary data at the obstacle at $t=0$ is effectively zero in machine precision. A rigorous way to obtain smooth boundary data satisfying the initial conditions is to define the modified data
$$
\tilde{g}(t;x) := \left(1-e^{-\frac{t^2}{2\sigma_w^2}}\right)g(t;x)
$$
with a suitably chosen $\sigma_w$. Then, if the beam is starting sufficiently far away from the obstacle we have $\tilde{g}(t;x)\approx g(t;x)$ with an exponentially small error, once the beam arrives at the obstacle. However, $\tilde{g}(0;x)=\frac{\partial \tilde{g}}{\partial t}(0;x)=0$, satisfying the initial conditions. Furthermore, $\lambda_{\tilde{G}}=\infty$ still holds.

Combining the analyticity results for $\mathcal{B}_U(z)$ and $G(z;x)$, we obtain the following statement for the analyticity of $U_d(z;x)$ with respect to $z$.
\begin{theorem}
\label{thm:u_analytic}
Let $\lambda_U:=\min\{\lambda_{\mathcal{B}},\lambda_G\}$. Then the function $U_d(z;x)$ is analytic with respect to $z$ for all $|z|<\lambda_U$.
\end{theorem}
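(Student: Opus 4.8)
The plan is to observe that, by the very definition $\lambda_U = \min\{\lambda_{\mathcal{B}}, \lambda_G\}$, both factors in the representation $U_d(z;x) = \mathcal{B}_U(z) G(z;x)$ are already known to be analytic on the open disk $\{z : |z| < \lambda_U\}$. Indeed, Theorem \ref{thm:bmeromorphic} together with \eqref{eqDefLambdaB} shows that the operator-valued map $z \mapsto \mathcal{B}_U(z)$ is analytic for $|z| < \lambda_{\mathcal{B}}$, while the decay assumptions on the time-domain data $g(t_n;x)$ guarantee that the function-valued map $z \mapsto G(z;x)$ is analytic for $|z| < \lambda_G$. It therefore remains only to verify that the application of an analytic operator-valued function to an analytic vector-valued (boundary-data) function is again analytic. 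This is a standard fact, but I would spell the argument out in order to make the functional-analytic setting precise.

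The key step is a local power-series argument. Fix $z_0$ with $|z_0| < \lambda_U$ and choose a closed disk $\overline{D(z_0,\rho)} \subset \{|z| < \lambda_U\}$. On this disk I would expand $\mathcal{B}_U(z) = \sum_{k=0}^\infty A_k (z - z_0)^k$ in the operator norm and $G(z;x) = \sum_{j=0}^\infty g_j(x)(z-z_0)^j$ in the norm of the boundary-data space, both series converging on a disk of radius at least $\lambda_U - |z_0| > 0$ because operator-valued and vector-valued analytic functions are locally bounded and admit convergent Taylor expansions. Forming the Cauchy product gives $U_d(z;x) = \sum_{n=0}^\infty \left(\sum_{k=0}^n A_k\, g_{n-k}(x)\right)(z-z_0)^n$, and the estimate $\big\|\sum_{k=0}^n A_k g_{n-k}\big\| \le \sum_{k=0}^n \|A_k\|\,\|g_{n-k}\|$ together with the standard bound on the radius of convergence of a product series shows that this series converges in the norm of the solution space on a neighbourhood of $z_0$. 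Hence $U_d(\cdot;x)$ is analytic at $z_0$, and since $z_0$ was arbitrary, analyticity on all of $\{|z| < \lambda_U\}$ follows. An equivalent route is to check complex differentiability directly and appeal to the product rule $\frac{d}{dz}U_d(z;x) = \mathcal{B}_U'(z)G(z;x) + \mathcal{B}_U(z)G'(z;x)$, both terms being well defined because $\mathcal{B}_U(z)$ is a bounded operator for each $z$ in the disk.

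I expect the only genuine subtlety — rather than a real obstacle — to be the interplay between the operator-norm analyticity of $\mathcal{B}_U$ and the vector-norm analyticity of $G$: one must ensure that $\mathcal{B}_U(z)$ is uniformly bounded on compact subsets of the analyticity region, which follows automatically from operator-valued analyticity, so that the Cauchy product converges in the correct topology. It is also worth noting that if one prefers to work with weak analyticity, the combination of local boundedness with weak analyticity already implies strong analyticity, so the precise choice of topology is immaterial here. Once this framework is fixed, the statement reduces to the elementary fact that products of analytic functions are analytic, and the proof is complete. The analogous statement in the Runge--Kutta setting follows in the same way from the representation \eqref{eq:RKComputationUd}.
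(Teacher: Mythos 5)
Your proposal is correct and follows essentially the same route as the paper, which simply notes that $U_d(z;x)=\mathcal{B}_U(z)G(z;x)$ is analytic wherever both factors are; you merely supply the standard functional-analytic details (local power series and the Cauchy product) that the paper leaves implicit. No gap.
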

\begin{proof}
We have $U_d=\mathcal{B}_U(z;x) G(z;x)$. Hence, $U_d$ is analytic with respect to $z$ if both $\mathcal{B}_U(z)$ and $G(z;x)$ are analytic with respect to $z$.
\end{proof}

\subsection{A remark on analyticity for Runge-Kutta schemes}
\label{subsec:analyticityrk}
To compute the radius of analyticity of $U_d$ in the Runge-Kutta case we could exploit the diagonalisation \eqref{eqRKdiagonalised} of $\Delta(z)$ and
using formula \eqref{eq:RKComputationUd} write the solution as
\begin{align}
U_d(z; x) = z R_m(z; x) =  z \sum_{j = 1}^{m} \left( \mathbb{P}(z)\right)_{m,j}  \left(  \mathcal{B}_W^{(j)}(z) \sum_{\ell = 1}^{m} \left(\mathbb{P}^{-1}(z)\right)_{j, \ell} G_{\ell}(z;x) \right),  \label{eq:solOpRK}
\end{align}
where $\mathcal{B}_W^{(j)}(z) = \mathcal{B}(k_z^{(j)})$ is the solution operator related to the wavenumber $k_z^{(j)}=i \frac{\gamma_j(z)}{c \Delta t}, \quad j = 1, \ldots, m$. If $\Delta(z)$ is diagonalisable everywhere for $|z|<1$, then the only singularities are those of the scalar solution operator (assuming the boundary data is sufficiently smooth).

However, this diagonalisation may break down at values of $z$ for which $\Delta(z)$ has a multiple eigenvalue, such as at $z=3\sqrt{3}-5$ for Radau IIa. Hence, in the particular case of Radau IIa this would only give analyticity within the disk of radius $3\sqrt{3}-5$.

In the case of Runge-Kutta methods, instead of a scalar solution operator for a scalar PDE we need to consider the solution operator $\mathbb{B}_{\mathcal{R}}(z)$ for the vector modified Helmholtz equation
\begin{align}
\begin{cases}
\Delta_x \mathcal{R}(z;x)  -  \left( \frac{\Delta(z)}{c \Delta t} \right)^2 \mathcal{R}(z;x) = 0,&x \in \Omega^+, \\
 \mathcal{R}(z; x) = \mathcal{G}(z; x),& x\in\Gamma,
\end{cases} \label{eqVectormodHelm}
\end{align}
for $|z|<1$
with outgoing boundary condition
$$
\mathcal{R}(z; x) = \sum_{n=0}^\infty\sum_{\ell=-n}^n h_n^{(1)}\left(i\frac{\Delta(z)}{z\Delta t}r\right)\mathbf{a}_{n,\ell}Y_n^{\ell}(\hat{x}),
$$
for sufficiently large $r=|x|$.
Here, $\mathbf{a}_{n,\ell}$ is a vector of $m$ coefficients. The matrix function $h_n^{(1)}\left(\frac{\Delta(z)}{z\Delta t}r\right)$
is well defined, since $\Delta(z)$ has no eigenvalue at $0$.

If in a certain domain $D\subset\mathbb{C}$ the matrix valued function $\Delta(z)$ is diagonalisable as $\Delta(z) = \mathbb{P}(z)\mathbb{D}(z)\mathbb{P}(z)^{-1}$ with $\mathbb{P}(z)$, $\mathbb{D}(z)$ and $\mathbb{P}^{-1}(z)$ analytic with respect to $z\in D$
by diagonalisation the solution operator $\mathbb{B}_{\mathcal{R}}$ is analytic if and only if the associated scalar solution operator $\mathcal{B}_U$ is analytic. However, the question about what happens in a neighbourhood of points $z$ for which $\Delta(z)$ is not analytically diagonalisable remains open. In Section \ref{sec:rkresults} we show numerical results that indicate that for Radau IIa the singularity of $\Delta(z)$ at $z=3\sqrt{3}-5$ does not influence the rate of convergence, and that as in the scalar case the rate of convergence is dominated by the singularities of the scalar solution operator.


\section{Convergence of the convolution quadrature method}
\label{sec:convergence}
The convergence results in this section describe how well the approximate solution $u_d^{N_f},$ obtained using $N_f$ frequencies in the Laplace domain, approximates $u_d$, the exact solution of the underlying time-stepping rule. We do not consider the question of convergence of $u_d$ against the exact solution $u$, which depends on well known properties of multistep or Runge-Kutta schemes.

The discretisation $u_d^{N_f}$ is obtained by applying a trapezoidal
rule to the contour integral \eqref{eqInverseZtransformLambda}. The
analysis presented in this section is therefore based on classical
convergence estimates for the trapezoidal rule (see e.g. \cite[Theorem
2.1]{trefethen2013}). However, we have chosen to present the
convergence analysis in detail as it highlights the connections
between the time-domain values $u_d(t_n;x)$ and the analyticity of the
Laplace domain function $U_d(z;x)$.

The results in this section only require the analyticity radius
$\lambda_U$ of the Laplace domain solution. While in principle the
results could therefore also be applied to Runge-Kutta methods, precise
estimates of the analyticity radius are only available for the
multistep case.

Using the analyticity of the frequency solution, we can get the following exact error representation.
\begin{theorem} \label{thm2}
Let $u_d$ and $u_d^{N_f}$ be defined by (\ref{eqInverseZtransformLambda}) and (\ref{eqInverseZtransformTrapz}).  Let $0 < \lambda<\lambda_U$, where $\lambda_U$ is the radius of analyticity of $U_d$ as defined in Theorem \eqref{thm:u_analytic}.
For the error $u_d^{N_f}(t_n; x) - u_d(t_n; x)$ we have
\begin{align}
\label{eqErrorNf}
 u_d^{N_f}(t_n; x) - u_d(t_n; x)  = \sum_{\kappa = 1}^{\infty} \lambda^{\kappa N_f} u_d(t_{n+\kappa N_f}; x).
\end{align}
\end{theorem}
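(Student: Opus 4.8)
The plan is to treat the trapezoidal rule \eqref{eqInverseZtransformTrapz} as an exact discrete operation on the Taylor series of $U_d$, so that the aliasing structure of the error emerges directly from the orthogonality of roots of unity. First I would insert the defining power series $U_d(z;x)=\sum_{m=0}^\infty u_d(t_m;x)z^m$ into \eqref{eqInverseZtransformTrapz}, giving
\begin{equation*}
u_d^{N_f}(t_n;x)=\frac{1}{N_f}\sum_{k=1}^{N_f}\frac{1}{z_k^{\,n}}\sum_{m=0}^\infty u_d(t_m;x)\,z_k^{\,m}
=\frac{1}{N_f}\sum_{k=1}^{N_f}\sum_{m=0}^\infty u_d(t_m;x)\,z_k^{\,m-n}.
\end{equation*}

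Next, using $z_k=\lambda e^{2\pi i k/N_f}$ I would write $z_k^{\,m-n}=\lambda^{m-n}e^{2\pi i k(m-n)/N_f}$, interchange the two sums, and exploit the discrete orthogonality relation
\begin{equation*}
\frac{1}{N_f}\sum_{k=1}^{N_f} e^{2\pi i k(m-n)/N_f}=
\begin{cases}1,& m-n\equiv 0 \pmod{N_f},\\[2pt] 0,&\text{otherwise.}\end{cases}
\end{equation*}
Only indices $m=n+\kappa N_f$ with $\kappa\in\mathbb{Z}$ survive, each contributing $\lambda^{\kappa N_f}u_d(t_{n+\kappa N_f};x)$. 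Splitting off $\kappa=0$, whose contribution $\lambda^0 u_d(t_n;x)=u_d(t_n;x)$ coincides with the value returned by the exact Cauchy integral \eqref{eqInverseZtransformLambda} (that integral simply extracts the $n$th Taylor coefficient of $U_d$), isolates the remaining terms as the error and yields \eqref{eqErrorNf}.

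The main technical point is justifying the interchange of the finite $k$-sum with the infinite $m$-sum. Since $\lambda<\lambda_U$ and $U_d(\cdot;x)$ is analytic on the disk of radius $\lambda_U$ by Theorem~\ref{thm:u_analytic}, its Taylor series converges absolutely and uniformly on the circle $|z|=\lambda$; absolute convergence of $\sum_m |u_d(t_m;x)|\lambda^m$ over the finitely many nodes legitimises the termwise summation and simultaneously guarantees that the surviving series $\sum_{\kappa\ge 1}\lambda^{\kappa N_f}u_d(t_{n+\kappa N_f};x)$ converges.

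The one subtlety I would flag is the range of $n$: a priori the surviving index set $m=n+\kappa N_f$, $m\ge 0$, could also include negative $\kappa$, producing backward-aliasing terms $\lambda^{\kappa N_f}u_d(t_{n+\kappa N_f};x)$ with $\kappa\le -1$. Such terms occur only if $n+\kappa N_f\ge 0$ for some $\kappa\le-1$, i.e. when $n\ge N_f$. Because we recover at most $N_f$ time steps from $N_f$ frequency solves, it suffices to restrict to $0\le n<N_f$, for which no such terms arise and the sum over $\kappa$ runs from $1$ to $\infty$ exactly as in \eqref{eqErrorNf}. I expect this bookkeeping, rather than any deep estimate, to be the only place demanding care.
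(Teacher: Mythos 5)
Your proposal is correct and follows essentially the same route as the paper's own proof: expand $U_d$ in its Taylor series (whose coefficients are the $u_d(t_m;x)$ by the Cauchy integral), substitute into the trapezoidal sum, and invoke the discrete orthogonality (aliasing) relation $\frac{1}{N_f}\sum_{k=1}^{N_f}z_k^{\,m-n}=\lambda^{\kappa N_f}$ for $m=n+\kappa N_f$ and $0$ otherwise, with the absolute convergence on $|z|=\lambda$ justifying the interchange of sums. Your closing remark about restricting to $0\le n<N_f$ to exclude backward-aliasing terms with $\kappa\le-1$ is a legitimate point that the paper's statement and proof leave implicit, and it is correctly resolved.
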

\begin{remark} This error representation is well known in the context of trapezoidal rule approximations of analytic functions. However, it highlights immediately that a CQ approximation is accurate either if $\lambda$ is sufficiently small or if the wave at time step $u_d(t_n;x)$ and all subsequent time steps have already left the area of observation, that is the values of $x$ we are interested in.
\end{remark}
\begin{proof}
We choose $0 < \lambda < \lambda_U$. Then $U_d$ is analytic in the disc of radius $\lambda$, and it can be expanded as a Taylor series
\begin{equation}
\label{eq:udtaylor}
U_d(z; x) = \sum_{n=0}^{\infty} c_n z^n
\end{equation}
with coefficients
\begin{align}
c_n = \frac{1}{2 \pi i} \int_{\left| z \right| = \lambda} \frac{U_d(z; x)}{z^{n+1}} dz = u_d(t_n; x). \label{eqDefCp}
\end{align}
Then, inserting (\ref{eq:udtaylor}) into (\ref{eqInverseZtransformTrapz}), it follows that
\begin{align}
u_d^{N_f}(t_j; x) & = \frac{1}{N_f}\sum_{{\ell}=1}^{N_f} \sum_{n=0}^{\infty} u_d(t_n;x)z_{\ell}^{n-j} = \sum_{\kappa=0}^{\infty} \lambda^{\kappa N_f} u_d(t_{j+\kappa N_f};x) \\
& =u_d(t_j;x)+ \sum_{\kappa=1}^{\infty} \lambda^{\kappa N_f} u_d(t_{j+\kappa N_f};x), \notag
\end{align}
since by aliasing
\begin{equation*}
\sum_{\ell = 1}^{N_f} z_{\ell}^{p-j} = \begin{cases} \lambda^{\kappa N_f}N_f & \text{if } p=j+\kappa N_f, \kappa \in \mathbb{N}, \\ 0  & \text{otherwise.}  \end{cases}
\end{equation*}
\end{proof}

In order to determine the asymptotic rate of convergence of this CQ method, we have to bound the error $\left|u_d^{N_f}(t_n; x) - u_d(t_n; x)\right|.$ We first need to obtain a bound on the time-domain values $u_d(t_n;x)$. We have the following Lemma.
\begin{lemma}
\label{lem:cnestimate}
Let the radius of analyticity $\lambda_U$ of $U_d(z;x)$ be defined as in Theorem \ref{thm:u_analytic}. Then for any $0 <\hat{\lambda}<\lambda_U$ we have
\begin{equation}
\label{eq:cnestimate2}
|u_d(t_n;x)|\leq \max _{|z|=\hat{\lambda}}|U_d(z;x)|\hat{\lambda}^{-n}
\end{equation}
It therefore follows that
$$
|u_d(t_n;x)| = \mathcal{O}\left((\lambda_U-\epsilon)^{-n}\right)
$$
for any $\epsilon>0$ arbitrarily small.
\end{lemma}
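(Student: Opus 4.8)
The plan is to recognize that the bound \eqref{eq:cnestimate2} is precisely the classical Cauchy estimate for the Taylor coefficients of an analytic function, applied to $U_d(\cdot;x)$. First I would invoke the coefficient formula \eqref{eqDefCp}, which identifies $u_d(t_n;x)$ with the $n$-th Taylor coefficient $c_n$ of $U_d(\cdot;x)$ via a Cauchy integral. The only subtlety is that \eqref{eqDefCp} is stated on the circle of radius $\lambda$, whereas we want the estimate on an arbitrary radius $\hat{\lambda}<\lambda_U$. Since $U_d(\cdot;x)$ is analytic throughout the open disc $\{|z|<\lambda_U\}$ by Theorem \ref{thm:u_analytic}, Cauchy's theorem guarantees that the contour integral is independent of the radius of the circle as long as it stays inside this disc; hence we may replace $\lambda$ by $\hat{\lambda}$ and write
\begin{equation*}
u_d(t_n;x) = \frac{1}{2\pi i}\int_{|z|=\hat{\lambda}}\frac{U_d(z;x)}{z^{n+1}}\,dz.
\end{equation*}

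Next I would apply the standard ML-estimate to this integral. Parametrizing the contour as $z=\hat{\lambda}e^{i\theta}$, $\theta\in[0,2\pi]$, so that $dz=i\hat{\lambda}e^{i\theta}\,d\theta$ and $|z^{n+1}|=\hat{\lambda}^{n+1}$, the modulus of the integrand times $|dz|$ becomes $|U_d(z;x)|\hat{\lambda}^{-n}\,d\theta$. Bounding $|U_d(z;x)|$ by its maximum over the circle and using $\frac{1}{2\pi}\int_0^{2\pi}d\theta=1$ yields exactly \eqref{eq:cnestimate2}. This step is entirely routine; there is no genuine obstacle, the only thing to verify being that $\max_{|z|=\hat{\lambda}}|U_d(z;x)|$ is finite, which holds because $U_d(\cdot;x)$ is continuous on the compact circle $|z|=\hat{\lambda}$ lying within its domain of analyticity.

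Finally, for the asymptotic statement I would fix $\epsilon>0$ and set $\hat{\lambda}=\lambda_U-\epsilon$. The quantity $C:=\max_{|z|=\lambda_U-\epsilon}|U_d(z;x)|$ is then a finite constant independent of $n$, so \eqref{eq:cnestimate2} gives $|u_d(t_n;x)|\leq C(\lambda_U-\epsilon)^{-n}$, which is the claimed $\mathcal{O}\left((\lambda_U-\epsilon)^{-n}\right)$ bound. The substance of the lemma is contained entirely in the first step, namely the freedom to push the integration radius out towards $\lambda_U$; everything after that is the textbook Cauchy coefficient estimate, so I expect no real difficulty beyond carefully justifying the contour-independence via analyticity.
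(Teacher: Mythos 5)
Your proposal is correct and follows essentially the same route as the paper's own proof: identify $u_d(t_n;x)$ with the Taylor coefficient $c_n$ via the Cauchy integral, use analyticity on the disc of radius $\hat{\lambda}<\lambda_U$ to move the contour, and apply the standard ML-estimate, then let $\hat{\lambda}$ approach $\lambda_U$ for the asymptotic statement. The paper's version is merely more terse; your write-up fills in the same steps in more detail.
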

\begin{proof}
Since $U_d(z;x)$ is analytic with respect to $z$ inside every closed disk of radius $\hat{\lambda}<\lambda_U$, the value of the integrand in \eqref{eqDefCp} is independent of $\hat{\lambda}$, and we can estimate
$$
|u_d(t_n;x)|=|c_n|\leq \max_{|z|=\hat{\lambda}}|U_d(z;x)|\hat{\lambda}^{-n}.
$$
The second statement follows by choosing $\hat{\lambda}$ arbitrarily close to $\lambda_U$.
\end{proof}
\begin{remark}
The result in Lemma \ref{lem:cnestimate} is reminiscent of classical energy decay estimates for the wave equation with zero Dirichlet conditions and nonzero initial conditions in the exterior of a non-trapping  obstacle. Let $S$ be a sphere of radius $R$ such that $S$ surrounds $\Omega$ and the support of the initial data is contained in~$S$.
Let $\|u(t;\cdot)\|_{E,R}:= \left[\int_{S\backslash\Omega}|\nabla u(t;x)|^2+|u_t(t;x)|^2\right]^{1/2}$ be the local energy in $S\backslash\Omega$ and $\|u(0;\cdot)\|_E$ the total energy of the initial data in $S$.

In \cite{Morawetz:1977gq} it is shown that
\begin{equation*}
\|u(t;\cdot)\|_{E,R}\leq Ce^{-\beta t}\|u(t;\cdot)\|_E
\end{equation*}
for $C,\beta>0$.

\end{remark}
The estimate in Lemma \ref{lem:cnestimate} is an asymptotic estimate as $n\rightarrow\infty$ and does not depend on whether $\Omega$ is trapping or not. It only depends on the location of the resonances and the behaviour of the Dirichlet boundary data. Note also that if $\lambda_U<1$ then Lemma \ref{lem:cnestimate} becomes a growth estimate. This is for example the case if $g$ is exponentially growing in time. We also note that the transient behaviour of $u_d(t_n;x)$ may look rather different, for example in multiple scattering configurations. The transient behaviour depends on the geometry and the evaluation point $x$ of the time-domain solution.

Combining Lemma \ref{lem:cnestimate} and Theorem \ref{thm2}, we can bound the error
$\left|u_d^{N_f}(t_n; x) - u_d(t_n; x)\right|$ as $n\rightarrow\infty$.
\begin{theorem}
\label{thm:uconvergence}
Let $0<\lambda<\lambda_U$. Then
$$
\left|u_d^{N_f}(t_n; x) - u_d(t_n; x)\right| =\mathcal{O}\left(\left(\frac{\lambda_U}{\lambda}-\epsilon\right)^{-N_f}\right)
$$
as $N_f\rightarrow\infty$.
\end{theorem}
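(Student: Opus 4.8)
The plan is to combine the exact error representation of Theorem \ref{thm2} with the decay bound of Lemma \ref{lem:cnestimate}, reducing everything to the summation of a geometric series. First I would fix $n$ and $0<\lambda<\lambda_U$ and introduce an auxiliary radius $\hat{\lambda}$ with $\lambda<\hat{\lambda}<\lambda_U$. Starting from the identity \eqref{eqErrorNf}, the triangle inequality gives
$$
\left|u_d^{N_f}(t_n;x)-u_d(t_n;x)\right|\leq\sum_{\kappa=1}^\infty\lambda^{\kappa N_f}\,\bigl|u_d(t_{n+\kappa N_f};x)\bigr|.
$$

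Next I would insert the estimate $|u_d(t_m;x)|\leq M\hat{\lambda}^{-m}$ coming from \eqref{eq:cnestimate2}, where $M:=\max_{|z|=\hat{\lambda}}|U_d(z;x)|$ is a finite constant independent of $\kappa$ (finiteness being guaranteed by $\hat{\lambda}<\lambda_U$, so that $U_d$ is analytic on the circle $|z|=\hat{\lambda}$). With $m=n+\kappa N_f$ this yields
$$
\left|u_d^{N_f}(t_n;x)-u_d(t_n;x)\right|\leq M\hat{\lambda}^{-n}\sum_{\kappa=1}^\infty\left(\left(\tfrac{\lambda}{\hat{\lambda}}\right)^{N_f}\right)^{\kappa}.
$$
Since $\lambda<\hat{\lambda}$, the ratio $(\lambda/\hat{\lambda})^{N_f}$ lies strictly in $(0,1)$, so the geometric series sums to $\frac{(\lambda/\hat{\lambda})^{N_f}}{1-(\lambda/\hat{\lambda})^{N_f}}$. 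The prefactor $M\hat{\lambda}^{-n}$ is a constant, and the denominator tends to $1$ as $N_f\to\infty$, whence the whole bound is $\mathcal{O}\bigl((\lambda/\hat{\lambda})^{N_f}\bigr)=\mathcal{O}\bigl((\hat{\lambda}/\lambda)^{-N_f}\bigr)$.

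Finally, to match the stated form I would let $\hat{\lambda}$ approach $\lambda_U$ from below. Writing $\hat{\lambda}=\lambda_U-\delta$, the base becomes $\hat{\lambda}/\lambda=\lambda_U/\lambda-\delta/\lambda$; choosing $\delta=\epsilon\lambda$ for arbitrarily small $\epsilon>0$ produces the base $\lambda_U/\lambda-\epsilon$, which is exactly the claimed rate.

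The argument is essentially bookkeeping once Theorem \ref{thm2} and Lemma \ref{lem:cnestimate} are available, so I do not anticipate a genuine obstacle. The only points requiring care are the strict ordering $\lambda<\hat{\lambda}<\lambda_U$, which is needed simultaneously to force the series ratio below one and to keep $M$ finite, and the observation that the constant $M\hat{\lambda}^{-n}$ may blow up as $\hat{\lambda}\to\lambda_U$ (when a singularity sits on $|z|=\lambda_U$). This last point is precisely why the theorem is stated with an arbitrary $\epsilon>0$ rather than the sharp rate $(\lambda_U/\lambda)^{-N_f}$: for each fixed $\epsilon$ the radius $\hat{\lambda}$ is kept strictly inside $\lambda_U$, so $M$ is finite and is harmlessly absorbed into the $\mathcal{O}$ constant, the estimate being asymptotic in $N_f$ with $n$ held fixed.
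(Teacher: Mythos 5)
Your proposal is correct and follows essentially the same route as the paper: both combine the exact error representation of Theorem \ref{thm2} with the bound \eqref{eq:cnestimate2} on an auxiliary circle $\lambda<\hat{\lambda}<\lambda_U$, sum the resulting geometric series to get the factor $\bigl(\lambda/\hat{\lambda}\bigr)^{N_f}/\bigl(1-(\lambda/\hat{\lambda})^{N_f}\bigr)$, and then take $\hat{\lambda}$ arbitrarily close to $\lambda_U$ to obtain the $\epsilon$-form of the rate. Your added remark on why the constant must be kept on a circle strictly inside $\lambda_U$ is a correct reading of the role of $\epsilon$, though the paper leaves it implicit.
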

\begin{proof}
Let $0<\lambda<\hat{\lambda}<\lambda_U$.
Inserting \eqref{eq:cnestimate2} into \eqref{eqErrorNf}, we obtain
\begin{equation}
\begin{split}
\left|u_d^{N_f}(t_n; x) - u_d(t_n; x)\right| &\leq \sum_{\kappa = 1}^{\infty} \lambda^{\kappa N_f} |u_d(t_{n+\kappa N_f}; x)|\\
& \leq  \max_{|z|=\hat{\lambda}}|U_d(z;x)| \hat{\lambda}^{-n} \frac{\left(\frac{\lambda}{\hat{\lambda}}\right)^{N_f}}{1-\left(\frac{\lambda}{\hat{\lambda}}\right)^{N_f}} =\mathcal{O}\left(\left(\frac{\lambda_U}{\lambda}-\epsilon\right)^{-N_f}\right)
\end{split}
\end{equation}
for any $\epsilon>0$ as $N_f\rightarrow\infty$ since we can choose $\hat{\lambda}$ arbitrarily close to $\lambda_U$.
\end{proof}
\begin{remark}
The analysis shows that we can increase the rate of convergence by choosing $\lambda$ small. However, while the rate of convergence indeed increases, choosing $\lambda$ too small creates numerical instabilities that limit the achievable accuracy, as indicated in \cite[Subsection 4.2]{Banjai:2010ij} and also demonstrated in Figure \ref{figChoiceLambda}.
\end{remark}

\begin{figure}
\centering \begin{tikzpicture}[scale=0.60]
\draw[thick,->] (-2,0) -- (7,0);
\draw[thick,->] (0,-4) -- (0,4);
\draw (-.2749,2.0161)  node[left] {pole};
\draw[red, thick] (2.5,0) circle (2.25cm);
\draw[blue, thick] (2.5,0) circle (3.4311cm);
\draw[thick] (2.5,0) -- (-.2749,2.0161) node[pos=0.4, above, sloped] {$\frac{\gamma({\lambda_U}z)}{c \Delta t}$};
\draw[thick] (2.5,-2.25)  -- (2.5,0) node[midway, right] {$\frac{\gamma(\lambda z)}{c \Delta t}$};
\fill (2.5,0) circle (3pt);
\fill[blue] (-.2749,2.0161) circle (3pt);
\draw (2.5, 3.4311) node[above right] {$\mathcal{C}_{{\lambda_U}}$};
\draw (2.5, -2.25) node[below] {$\mathcal{C}_{{\lambda}}$};
\end{tikzpicture}
\caption{Contour for the contour integral with the backward Euler scheme, closest pole to the contour and $\lambda$ and $\lambda_U$.}
\label{figContour}
\end{figure}
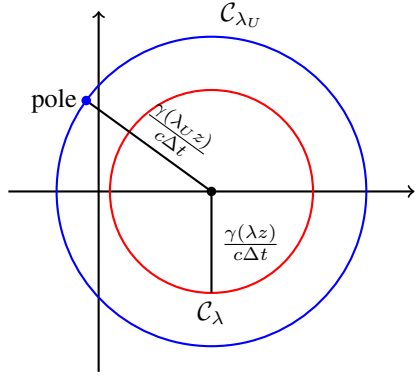

Theorem \ref{thm:uconvergence} is depicted again in Figure \ref{figContour} for the case of backward Euler, where $\gamma(z)=1-z$. The rate of convergence depends on the relative distance of the closest pole to $\mathcal{C}_{\lambda}$ the red circle with radius $\lambda$.

\section{Boundary integral formulations of the frequency domain problem}
\label{formulationsfreqpb}
The frequency domain problem \eqref{eqModifiedHelmholtz} is posed in an unbounded domain. In order to solve it numerically, we need to formulate a problem on a bounded domain, either by using boundary integral formulations or by discretising a finite domain together with an absorbing boundary condition such as Perfectly Matched Layers (PML) \cite{harari2000analytical} or Hardy space infinite elements \cite{Hohage2009}. Both introduce additional poles in the solution operator. A PML layer will lead to an additional continuous spectrum from zero to infinity \cite{Kim:2009us}. Boundary integral formulations have resonances, which are related to the corresponding interior problems. Hence, the convergence results depicted in Section \ref{sec:convergence} depend not only on the scattering poles, but also on the poles introduced by the formulation of the frequency domain problem on a finite domain.

This section gives an overview of possible integral equation formulations for the frequency domain problem and discusses how these formulations introduce additional poles into the solution operator. The frequency domain problem is a modified Helmholtz problem of the form
\begin{align}
\begin{cases}
\displaystyle  \omega^2 U(x)-\Delta U(x) = 0,  & x \in  \Omega^{+}, \\
U(x) = G(x), & x \in \Gamma,
\end{cases}
\label{eqModifiedHelmholtz2}
\end{align}
with $\omega\in\mathbb{C}$ and outgoing boundary conditions towards infinity as described in Section \ref{sec:cq_formulation}.
The Green's function associated with the modified Helmholtz problem is $g_{\omega}(x,y):=\frac{e^{-\omega|x-y|}}{4 \pi |x-y|}$.

We define the single and double layer potential operators for the modified Helmholtz equation as
$$
\left[\mathcal{S_{\omega}}\phi\right](x)=\int_{\Gamma}g_{\omega}(x,y)\phi(y)ds(y),\quad
\left[\mathcal{K_{\omega}}\phi\right](x)=\int_{\Gamma}\frac{\partial}{\partial n(y)}g_{\omega}(x,y)\phi(y)ds(y),\quad x\in\Omega^{+}.
$$
Both operators satisfy the modified Helmholtz equation in the exterior of the domain. We also need the single layer boundary operator $S_{\omega}$ and the double layer boundary operator $K_{\omega}$ defined by
$$
\left[{S_{\omega}}\phi\right](x)=\int_{\Gamma}g_{\omega}(x,y)\phi(y)ds(y),\quad
\left[{K_{\omega}}\phi\right](x)=\int_{\Gamma}\frac{\partial}{\partial n(y)}g_{\omega}(x,y)\phi(y)ds(y),\quad x\in\Gamma.
$$
Let $\gamma_0$ be the exterior trace operator. Then $S_{\omega}=\gamma_0\mathcal{S}_{\omega}$ and $\frac{1}2I+K_{\omega}=\gamma_0\mathcal{K}_{\omega}$, where $I$ is the identity operator. Details of mapping properties for these operators can be found in \cite{COSTABEL:1988vk}.

In this paper we only consider indirect boundary integral formulations. The results for direct boundary integral formulations are very similar.

\subsection{Indirect first kind integral formulation}

An integral formulation of the first kind to solve the modified Helmholtz equation \eqref{eqModifiedHelmholtz2} for a given parameter $\omega\in \mathbb{C}$ is given by
\begin{equation}
\left[S_{\omega} \phi \right] (x) = G(x), \quad x \in \Gamma, \label{eqIndFirstKind}
\end{equation}
The solution in the exterior $\Omega^{+}$ is then obtained as $U=\mathcal{S_{\omega}}\circ S_{\omega}^{-1}G$.
This representation holds for all $\omega$ such that $i\omega\neq k_j$ and $i\omega\neq p_j$,
where the $p_j$ are the scattering poles as defined in Theorem \ref{thm:meromorphic}, and
the $k_j$ are
the eigenfrequencies of the interior Dirichlet eigenvalue problem, satisfying
\begin{align}
\begin{cases}
\displaystyle -\Delta v(x) = k_j^2v(x), & x \in \Omega,\nonumber\\
v(x) = 0, & x \in\Gamma,\nonumber
\end{cases}
\end{align}
for some nonzero $v\in H^1(\Omega)$ (see \cite{Colton:1998vu}). The situation is depicted in Figure \ref{figPolesSL} for the case of backward Euler and a unit sphere as domain. The red dots show the Dirichlet eigenvalues closest to the contour given by the time-stepping rule.

\subsection{Indirect second kind integral formulation}
Using an indirect second kind formulation, we obtain the integral equation
\begin{equation}
\left[ \left( \frac{1}{2} I + K_{\omega}\right) \phi \right] (x) = G(x),\quad x \in \Gamma, \label{eqIndSecKind}
\end{equation}
which gives the representation of the exterior solution in $\Omega^{+}$ as $U=\mathcal{K}_{\omega}\left(\frac12 I+K_{\omega}\right)^{-1}G$.

Similar to the case of the indirect first kind formulation, this representation is valid for all $\omega$ such that $i\omega\neq p_j$ and $i\omega \neq \mu_j$, where the $\mu_j$
are the eigenfrequencies of the interior Neumann eigenvalue problem, satisfying
\begin{align}
\begin{cases}
\displaystyle -\Delta v = \mu^2 v & \text{ in } \Omega, \notag\\
\displaystyle \frac{\partial v}{\partial n} = 0 & \text{ on } \Gamma, \notag
\end{cases}
\end{align}
for some $v\in H^1(\Omega)$. We recall that $n$ is the outgoing normal to $\Omega$.
However, since $0$ is always an eigenvalue of the interior Neumann eigenvalue problem, the value $\omega=0$ is always a pole for the representation as indirect second kind integral equation. Suppose that we use backward Euler as time-stepping rule. Then $\gamma(z)=1-z$, and if the Z-transform of the boundary data has a sufficiently large radius of analyticity, it follows that $\lambda_U=1$. Applying Theorem
\ref{thm:uconvergence}, we obtain the simple convergence estimate
\begin{equation}
\left| u_d^{N_f}(t_n; x) - u_d(t_n; x) \right| = \mathcal{O}\left( \left(\lambda+\epsilon\right)^{N_f} \right)\label{eqCvRateSecKf}
\end{equation}
for any $\epsilon>0$ and $\lambda<1$. Figure \ref{figPolesDL} shows the location of the poles with respect to the contour given by the backward Euler rule for the case of the indirect second kind formulation. The pole at zero is always closest to the contour and dominates the convergence behaviour.
\begin{figure}[h]
\centering \subfigure[First Kind Integral Formulation.]{\label{figPolesSL}\includegraphics[width=0.45\textwidth]{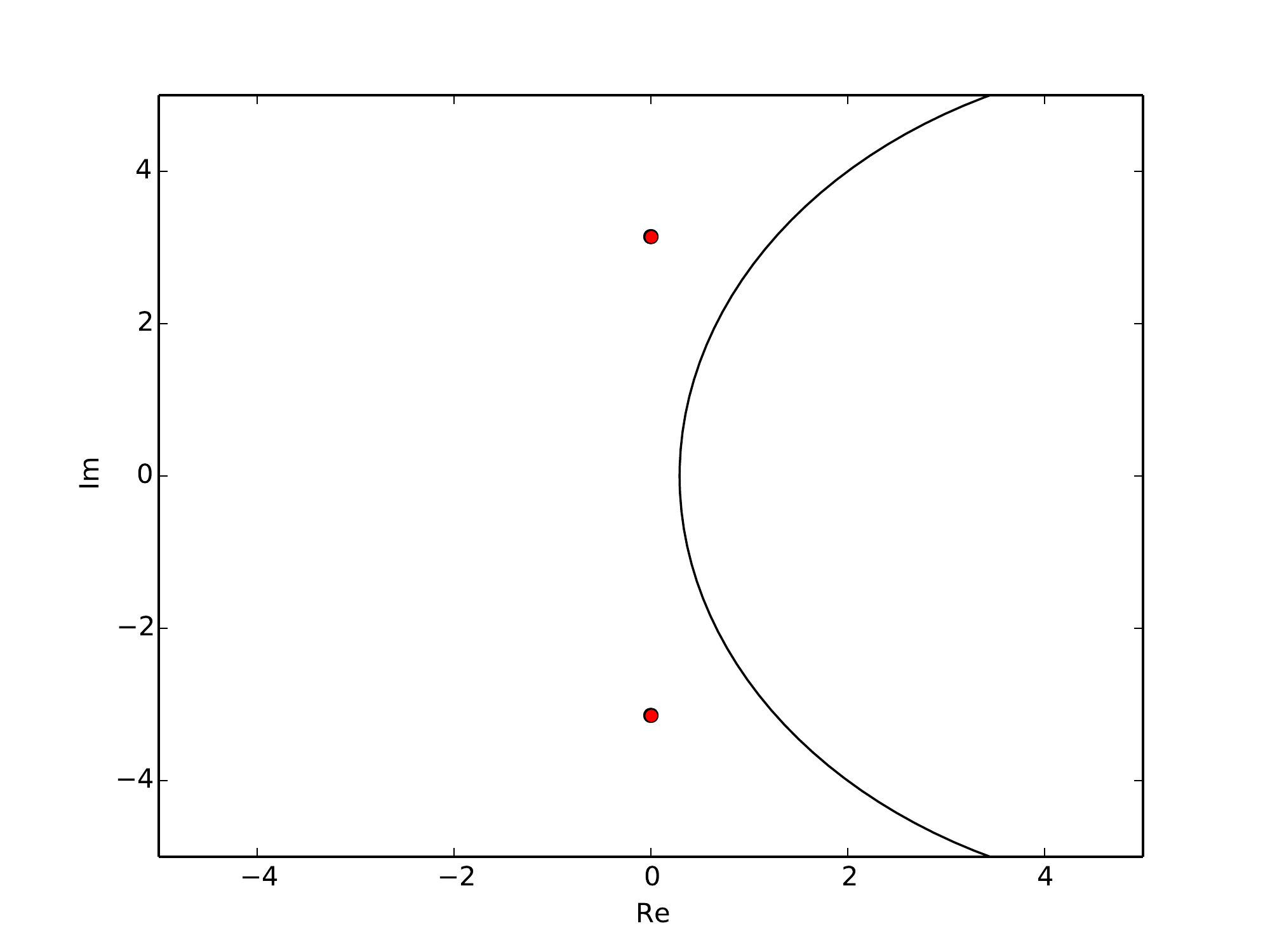}} \quad \subfigure[Second Kind Integral Formulation.]{\label{figPolesDL}\includegraphics[width=0.45\textwidth]{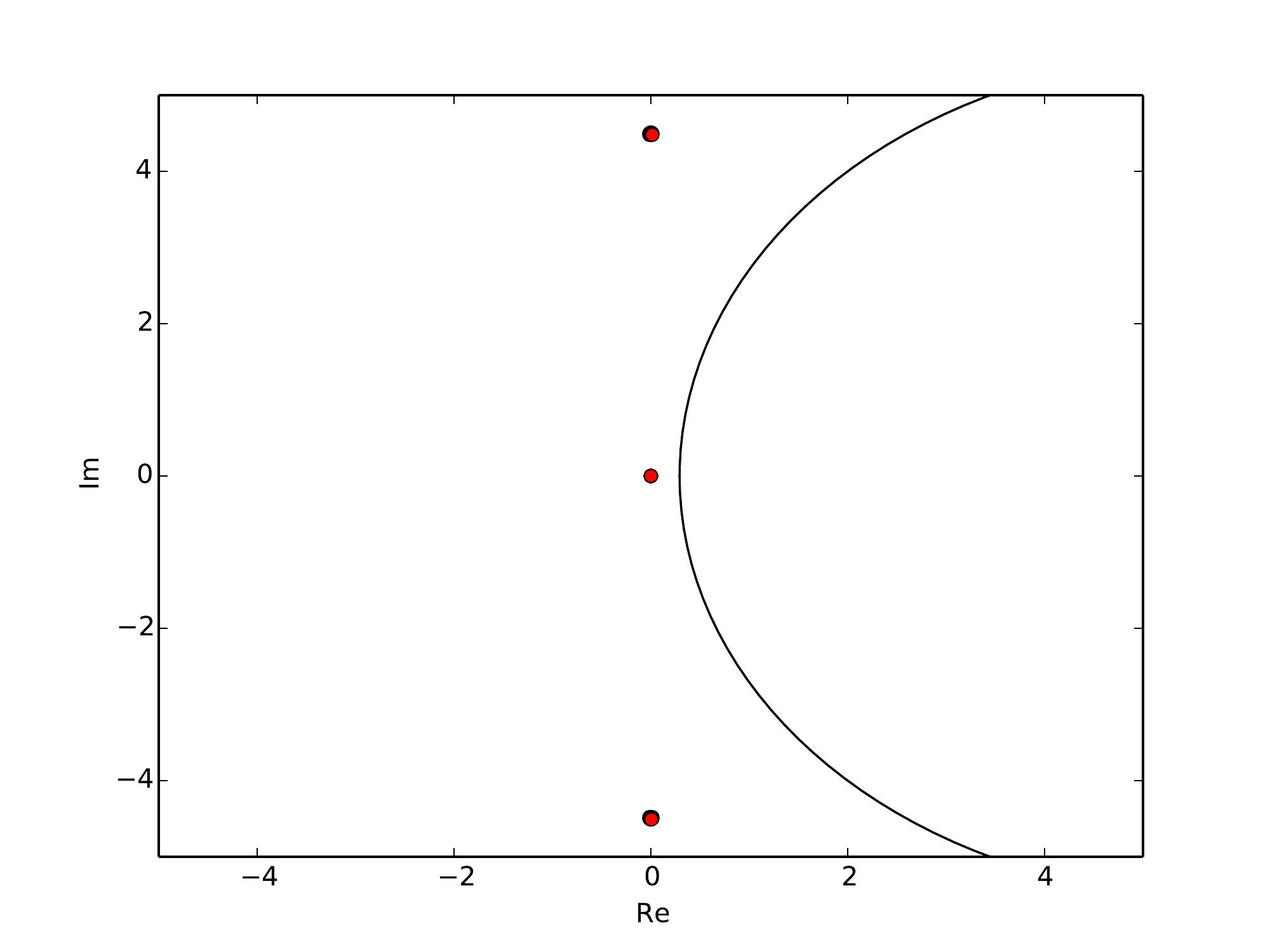}}
\caption{Poles located near the contour for the indirect first and second kind integral formulations.}
\label{figPolesContour1}
\end{figure}

\subsection{Indirect combined integral formulation}
The indirect formulation of the second kind always has a pole at zero while the indirect formulation of the first kind has a pole related to the first eigenvalue of the interior Dirichlet problem. It is therefore sensible to consider a combined formulation to try to push away the smallest magnitude pole introduced by the boundary integral formulation. A combined formulation to solve \eqref{eqModifiedHelmholtz2} takes the form
\begin{equation}
\label{eqIndCombined}
\left[ \frac{1}{2} I + K_{\omega} +\eta S_{\omega}\right]\phi (x) = G(x),\quad x \in \Gamma.
\end{equation}
The representation of the solution in $\Omega^{+}$ is therefore given by
$$
U=\left[\mathcal{K}_{\omega}+\eta\mathcal{S}_{\omega}\right]\left(\frac{1}{2} I + K_{\omega} +\eta S_{\omega}\right)^{-1}G.
$$
The following result is a reformulation of \cite[Theorem 3.33]{Colton:1983uk} for the modified Helmholtz equation \eqref{eqModifiedHelmholtz2}.
\begin{theorem}
\label{thm:combinedFormulationSolvability}
Let $\eta\neq 0$ with $\mathrm{Re}\{\eta\}=0$ and $\mathrm{Im}\{\eta\}\mathrm{Im}\{\omega\}\geq 0$. Then the combined formulation \eqref{eqIndCombined} is uniquely solvable for all frequencies $\omega$ satisfying $\mathrm{Re}\{\omega\}\geq 0$.
\end{theorem}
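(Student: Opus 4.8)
The plan is to follow the classical Colton--Kress argument, adapted to the modified Helmholtz equation via the substitution $k=i\omega$ that turns $g_\omega$ into the standard outgoing fundamental solution. Since $\Gamma$ is smooth, the boundary operators $S_\omega$ and $K_\omega$ are compact on $L^2(\Gamma)$, so the combined operator $\frac{1}{2} I + K_\omega + \eta S_\omega$ is a compact perturbation of $\frac{1}{2} I$ and is therefore Fredholm of index zero. By the Fredholm alternative, unique solvability is equivalent to injectivity, so the whole proof reduces to showing that the kernel is trivial.

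To this end, I would take $\phi\in L^2(\Gamma)$ with $\bigl(\frac{1}{2} I + K_\omega + \eta S_\omega\bigr)\phi = 0$ and form the combined potential $u := \mathcal{K}_\omega\phi + \eta\,\mathcal{S}_\omega\phi$ on $\mathbb{R}^3\setminus\Gamma$. Using the trace relations $S_\omega=\gamma_0\mathcal{S}_\omega$ and $\frac{1}{2} I + K_\omega=\gamma_0\mathcal{K}_\omega$ from Section~\ref{formulationsfreqpb}, the exterior Dirichlet trace of $u$ equals $\bigl(\frac{1}{2} I + K_\omega + \eta S_\omega\bigr)\phi = 0$. Hence $u$ solves the exterior modified Helmholtz problem \eqref{eqModifiedHelmholtz2} with zero boundary data and outgoing radiation conditions, and uniqueness of that problem for $\mathrm{Re}\{\omega\}\geq 0$ gives $u\equiv 0$ in $\Omega^{+}$; in particular both $\gamma_0^{+}u=0$ and $\partial_n^{+}u=0$.

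The next step is to transfer this to the interior through the jump relations of the layer potentials. Since the single-layer trace is continuous and the double-layer potential jumps by $\phi$, while the single-layer conormal derivative jumps by $-\phi$ and the double-layer conormal derivative is continuous, the interior Cauchy data of $u$ are $\gamma_0^{-}u=-\phi$ and $\partial_n^{-}u=\eta\phi$, with $n$ the outward normal to $\Omega$. As $u$ solves $\Delta u=\omega^2 u$ in the bounded domain $\Omega$, Green's first identity yields
\begin{equation*}
\int_\Omega\bigl(|\nabla u|^2 + \omega^2|u|^2\bigr)\,dx = \int_\Gamma\overline{\gamma_0^{-}u}\,\partial_n^{-}u\,ds = -\eta\int_\Gamma|\phi|^2\,ds.
\end{equation*}
Writing $\eta=i\,\mathrm{Im}\{\eta\}$ (since $\mathrm{Re}\{\eta\}=0$) and taking imaginary parts eliminates the real term $\int_\Omega|\nabla u|^2$ and leaves $2\,\mathrm{Re}\{\omega\}\,\mathrm{Im}\{\omega\}\int_\Omega|u|^2\,dx = -\mathrm{Im}\{\eta\}\,\|\phi\|_{L^2(\Gamma)}^2$. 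Multiplying by $\mathrm{Im}\{\eta\}$ and invoking $\mathrm{Re}\{\omega\}\geq 0$ together with $\mathrm{Im}\{\eta\}\,\mathrm{Im}\{\omega\}\geq 0$ makes the left-hand side nonnegative, whereas the right-hand side equals $-\mathrm{Im}\{\eta\}^2\|\phi\|_{L^2(\Gamma)}^2\leq 0$; equality forces $\mathrm{Im}\{\eta\}^2\|\phi\|_{L^2(\Gamma)}^2=0$, and since $\eta\neq 0$ we conclude $\phi=0$.

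I expect the main obstacle to be the exterior uniqueness step rather than the algebra. For $\mathrm{Re}\{\omega\}>0$ the fundamental solution $g_\omega$ decays exponentially and uniqueness follows from a direct energy/decay argument, but the borderline case $\mathrm{Re}\{\omega\}=0$ corresponds to a real wavenumber $k=i\omega$ and needs the Rellich lemma together with unique continuation to exclude nontrivial outgoing solutions with vanishing Dirichlet data. Some care is also required to fix the precise sign conventions in the jump relations and to justify the compactness of $S_\omega$ and $K_\omega$ in the chosen trace spaces; both are standard for the $C^2$ boundaries assumed in \cite{Colton:1983uk}, so I would cite them rather than reprove them.
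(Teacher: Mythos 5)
Your proposal is correct and is precisely the classical Colton--Kress combined-field uniqueness argument (Fredholm alternative plus jump relations, interior Green's identity, and exterior uniqueness); the paper itself gives no proof, deferring entirely to \cite[Theorem 3.33]{Colton:1983uk}, whose proof is exactly the argument you reconstruct. The sign bookkeeping in your jump relations and the imaginary-part computation both check out against the paper's conventions, so there is nothing to add beyond the exterior-uniqueness caveat you already flag.
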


In addition to singularities at the scattering poles, the combined formulation has resonances at the eigenfrequencies $\nu$ of the modified interior impedance eigenvalue problem
\begin{align}
\begin{cases}
-\Delta v = -\nu^2 v &  \text{in } \Omega, \\
\frac{\partial v}{\partial n}+\eta v = 0& \text{on } \Gamma
\end{cases}
\label{eqInteriorImpedance}
\end{align}
for some $v\in H^1(\Omega)$. For real $\eta$ it can be readily seen that all eigenfrequencies $\nu$ lie on the imaginary axis. Moreover, as $\eta\rightarrow 0$ the smallest eigenfrequency $\nu$ approaches $0$ since $\eta=0$ corresponds to the Neumann case. For $\eta\rightarrow\infty$ the smallest eigenfrequency $\nu$ approaches the smallest eigenfrequency of the Dirichlet case.

If $\mathrm{Im}\{\eta\}>0$, then by Theorem \ref{thm:combinedFormulationSolvability} and the fact that if $\nu$ is an eigenfrequency, then also $-\nu$ is an eigenfrequency, it follows that the interior impedance eigenvalues can only be located in the lower right quadrant and in the upper left quadrant of the complex plane. Hence, singularities can occur close to or in the interior of the contour defined by the values $\frac{\gamma(z)}{c\Delta t}$, $|z|=\lambda$. This is demonstrated in Figure \ref{figPolesCLc} for $\eta=i$. We now have a pole inside the contour given by the backward Euler rule, and we have to modify the contour (e.g. by choosing $\lambda<1$) to remedy the situation.

To avoid this problem, one strategy is to choose $\eta = \omega$ in \eqref{eqIndCombined}. The mapping properties of the resulting combined field operator were analysed in \cite{monk:2014}. There it is shown that the combined potential operator has a bounded $L^2$ inverse for all wavenumbers satisfying $\mathrm{Re}\{\omega\}>0$. The corresponding situation is depicted in Figure \ref{figPolesCLk}. The location of poles in this combined formulation is not symmetric any more. However, as in the case of the second kind integral formulation, we still have a pole at zero. Hence, for the asymptotic rate of convergence of the CQ approximation for the backward Euler rule there is no difference between the second kind formulation and the combined formulation with $\eta=\omega$.


\begin{figure}[h]
\centering \subfigure[Combined Integral Formulation with $\eta =  i$]{ \label{figPolesCLc} \includegraphics[width=0.45\textwidth]{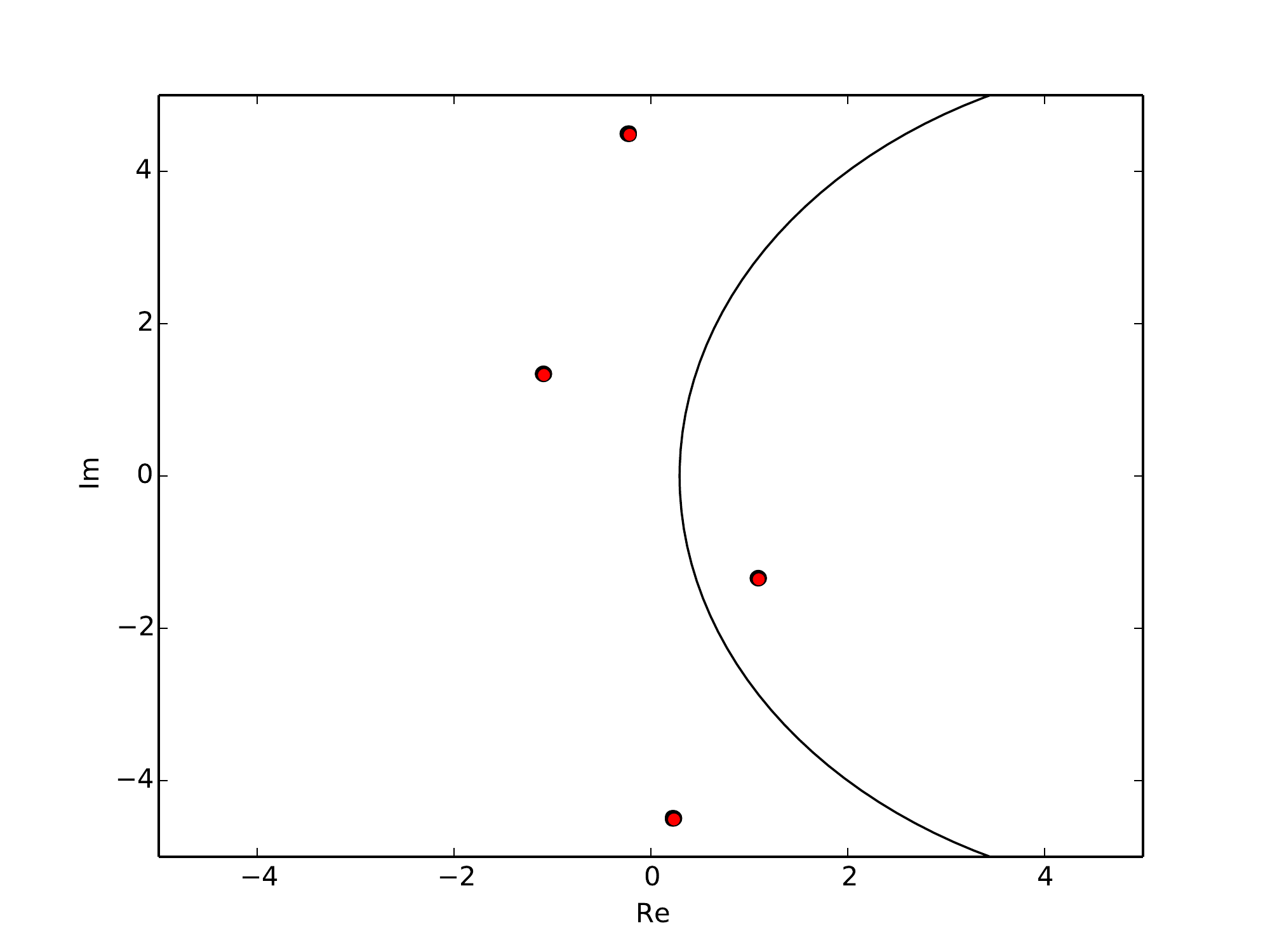}} \quad \subfigure[Combined Integral Formulation with $\eta=\omega$]{\label{figPolesCLk}\includegraphics[width=0.45\textwidth]{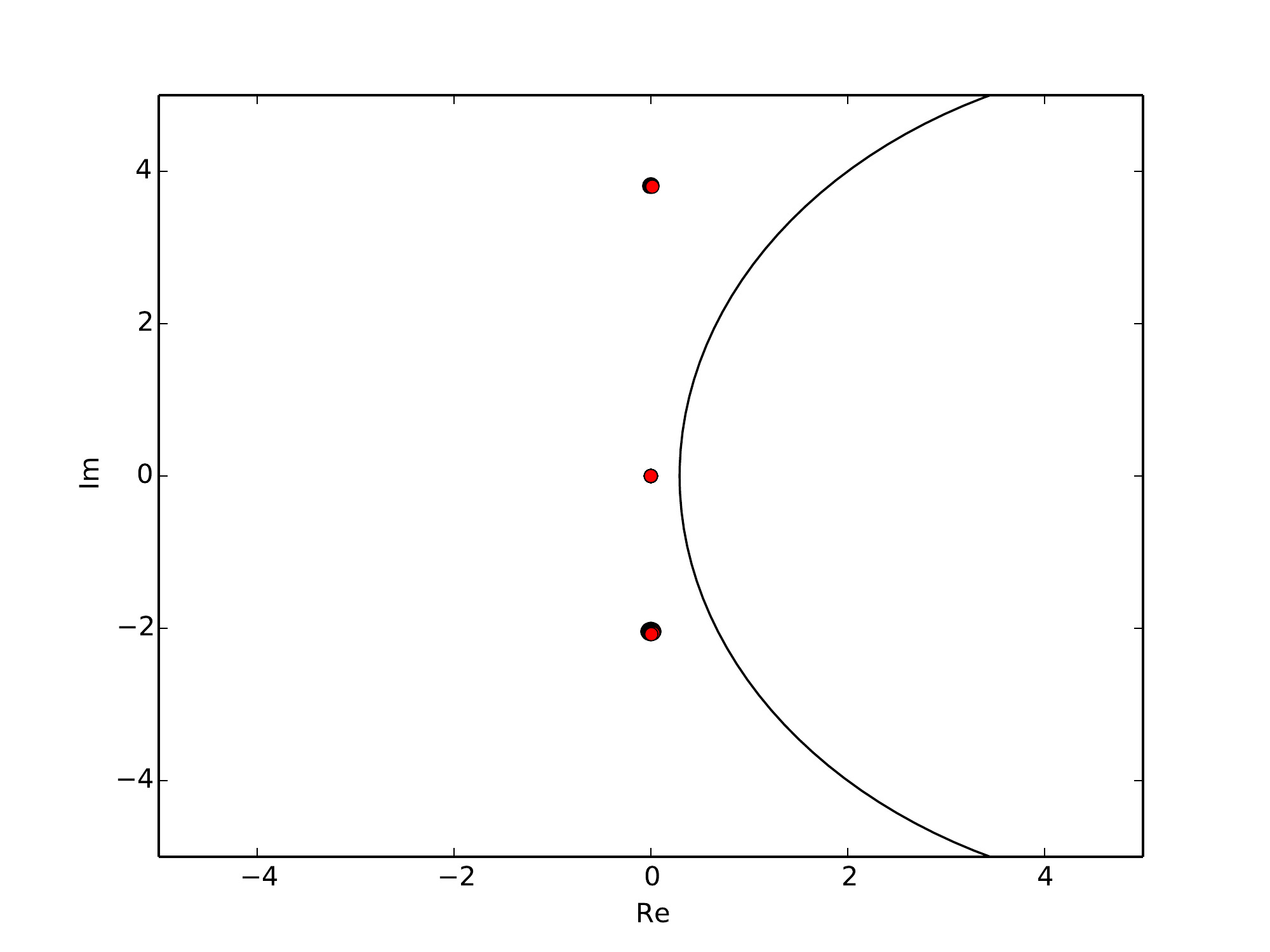}}
\caption{Poles located near the contour for two different combined integral formulations for the scattering by the unit sphere.}
\label{figPolesContour}
\end{figure}

\section{Numerical results} \label{secNumRes}
In this section we demonstrate the numerical behaviour of the CQ method as the number of frequencies $N_f$ is increased for fixed time $N_t$. The model problem is the acoustic wave equation
\begin{align}
\begin{cases}
\displaystyle \frac{\partial^2 u}{\partial t^2}(t; x) - c^2 \Delta_xu(t; x) =0, & x \in \Omega^{+}, \\
\displaystyle u(0; x) = \frac{\partial u}{\partial t}(0; x) = 0, & x \in\Omega^{+}, \\
u(t; x) = g(t; x), & x \in \Gamma,
\end{cases} \label{eqPbNumeric}
\end{align}
with boundary condition
\begin{align}
g(t;x)=-\cos\left(2\pi\left(t-\frac{d\cdot x}{c}\right)f\right)e^{-\frac{\left(t-t_p-\frac{d\cdot x}{c}\right)^2}{2\sigma^2}}, \label{eqdefgincident}
\end{align}
Here, we use the parameters $t_p= 10^{-3}$, $\sigma = \frac{6}{2000 \pi}$, $c = 343$, $d = \left(1,0,0\right)^t$. The final time is $T_f = 20\cdot 10^{-3}$ and the number of time steps is $N_t = 40$.
We recall that $g(t_n;x)$ decreases exponentially as $n \to \infty$. Hence, for the radius of analyticity $\lambda_G$ of the Z-transform of the boundary data we obtain $\lambda_G = \infty$ and according to Theorem \ref{thm:u_analytic}, $\lambda_U = \lambda_{\mathcal{B}}$  with $\lambda_{\mathcal{B}}$ defined by (\ref{eqDefLambdaB}). In the first part of this section $\Omega$ will be the unit sphere in $\mathbb{R}^3$. Later, we will present results for a more challenging trapping domain.

We evaluate the time-domain solution in the observation domain $$\Omega_{\text{obs}} = \left\{ x \in [-3,3], y\in [-3,3] , z=0; \sqrt{x^2+y^2} > 1\right\}.$$ The maximum pointwise error in $\Omega_{\text{obs}}$ is
measured as
\begin{align}
\text{AbsDiff}(N_f) = \max_{n \in [0,N_t]} \left\Vert u_d^{N_f}(t_n;x) - u_{\text{ref}}(t_n;x) \right\Vert_{L^{\infty}(\Omega_{\text{obs}})}.\label{eqDefAbsErr}
\end{align}
Reference solutions are computed by using a very high number of frequencies $N_f$ in the Laplace domain. All numerical results in this section were computed using the boundary element package BEM++ (\url{www.bempp.org}) \cite{SmigajBetcke13}.

\subsection{Validation of the theoretical rate of convergence}
We first compare the predicted rate of convergence in Theorem \eqref{thm:uconvergence} with the observed convergence in the case of the indirect second kind integral formulation (\ref{eqIndSecKind}) and backward Euler time-stepping rule.
The location of the poles for this formulation was depicted in Figure \ref{figPolesDL}. The pole at zero dominates the convergence. Comparisons of the theoretical estimated rate of convergence and the measured decay of $\text{AbsDiff}(N_f)$ for various $\lambda$ are shown in Figure \ref{figSecKind}. There is a very close match between the theoretical estimate and the achieved rate of convergence. It is interesting to consider the point $N_f=N_t$, where we have the same number of frequency domain solves as there are time-steps. This corresponds to previously proposed CQ methods. As expected, the error becomes smaller at this point as $\lambda$ decreases. However, for very small $\lambda$ the convergence soon starts to level off due to numerical instabilities with small $\lambda$. This is further shown in Figure \ref{figChoiceLambda}, where the maximum achievable accuracy in dependence of $\lambda$ is demonstrated for the second kind formulation.
\begin{figure}
\centering
\subfigure[Theoretical and numerical results for the scattering by the unit sphere using the indirect second kind integral formulation.] {\label{figSecKind} \centering \includegraphics[height=0.3\textheight]{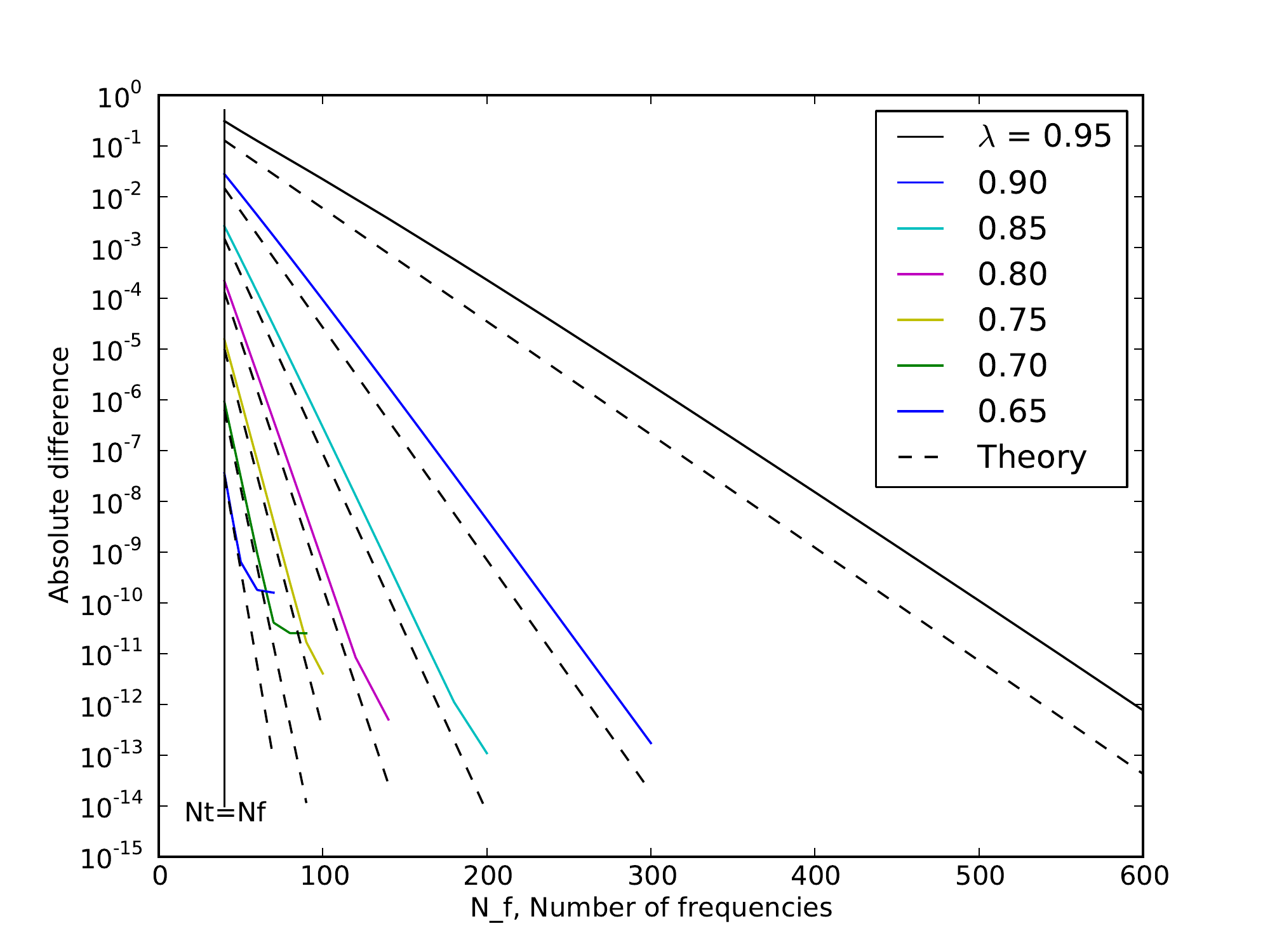}}\\
\subfigure[Limit of achievable accuracy when $\lambda$ is small for the unit sphere using the indirect second kind integral formulation.]{\label{figChoiceLambda}  \centering \includegraphics[height=0.3\textheight]{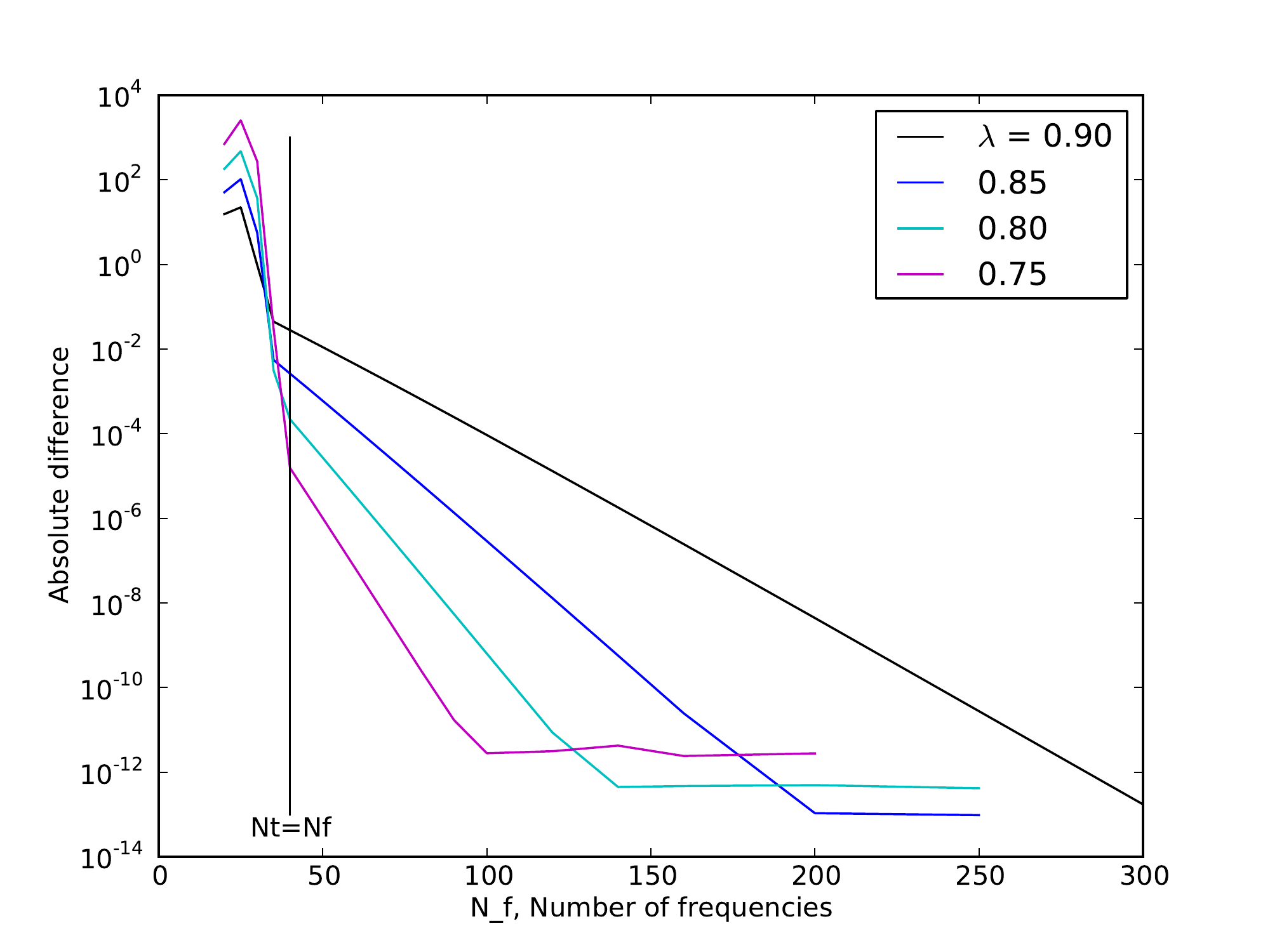}}
\caption{Convergence on the unit sphere for the indirect second kind integral equation.}
\end{figure}

We now demonstrate how the rate of convergence changes for different coupling coefficients in the combined integral formulation \eqref{eqIndCombined}.
We fix $\lambda=0.95$. Figure \ref{figCombinedEta1} shows the convergence for constant real $\eta = 1$. In this case we have
$\lambda_U \approx  1.0346$ and therefore a convergence rate of $\left(\frac{\lambda}{\lambda_U}\right)^{N_f}\approx 0.9182^{N_f}$, whereas for the standard second kind formulation we would only expect a rate of convergence of $0.95^{N_f}$.
The combined formulation with $\eta = \omega$ converges with a rate of $0.95^{N_f}$, the same rate as the second kind formulation, as shown in Figure \ref{figCombinedEtaW}. However, comparing Figure \ref{figCombinedEtaW} and \ref{figSecKind} it becomes obvious that the combined formulation with $\eta = \omega$ is significantly more accurate than the second kind formulation for the same number of frequencies. Indeed, at the point $N_f = N_t$ we have an error of $3.106\cdot 10^{-1}$ for the second kind formulation and an error of $8.572\cdot 10^{-3}$ for the combined formulation. Hence, in practice the combined formulation may be preferable.
\begin{figure}[h]
\subfigure[Absolute difference for the unit sphere using the indirect combined integral formulation with real combining coefficient $\eta=1$.]{\label{figCombinedEta1}  \centering \includegraphics[height=0.26\textheight]{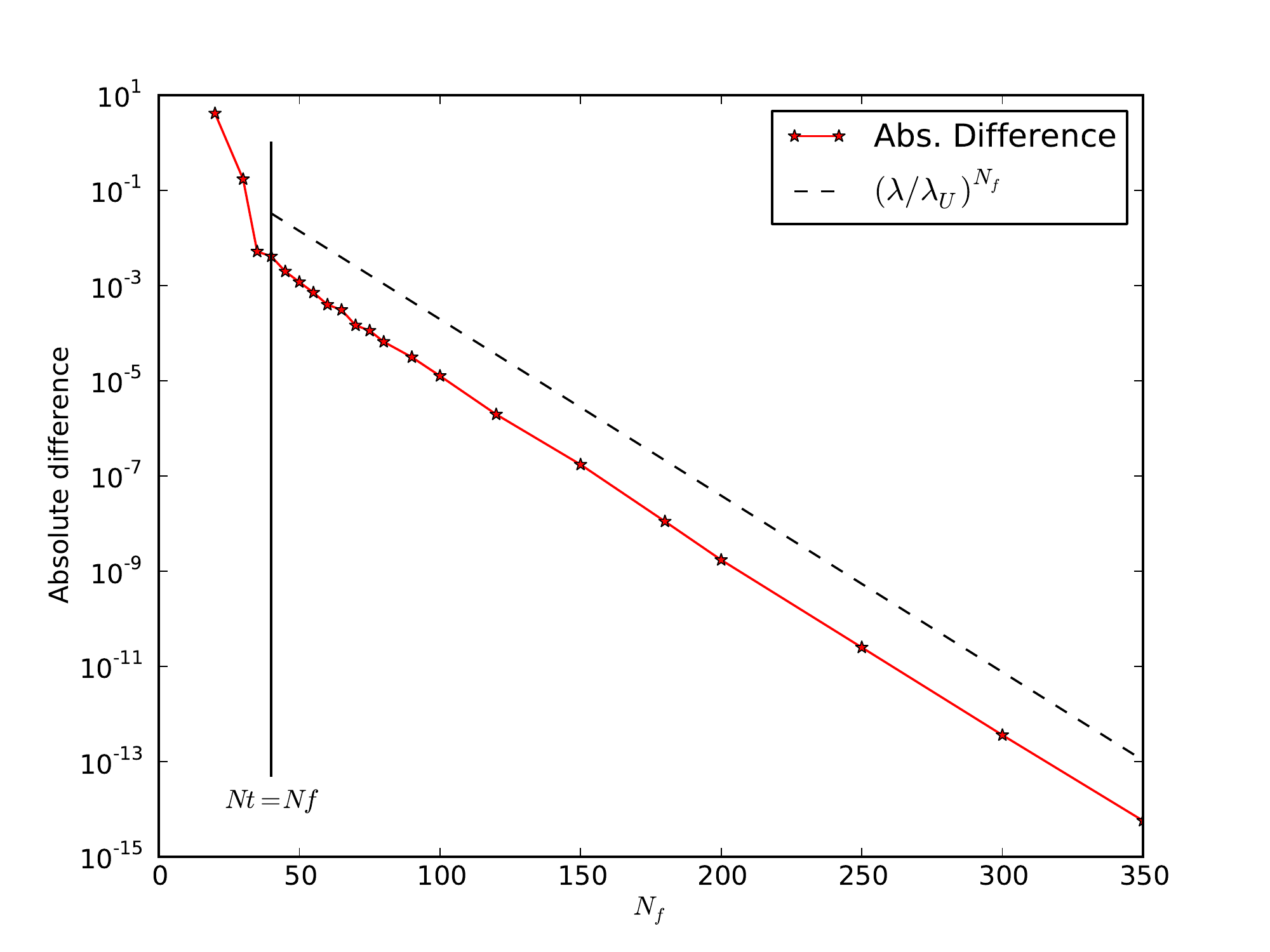}} \quad \subfigure[Absolute difference for the unit sphere using the indirect combined integral formulation with varying coefficient $\eta=\omega$.] {\label{figCombinedEtaW}  \centering \includegraphics[height=0.26\textheight]{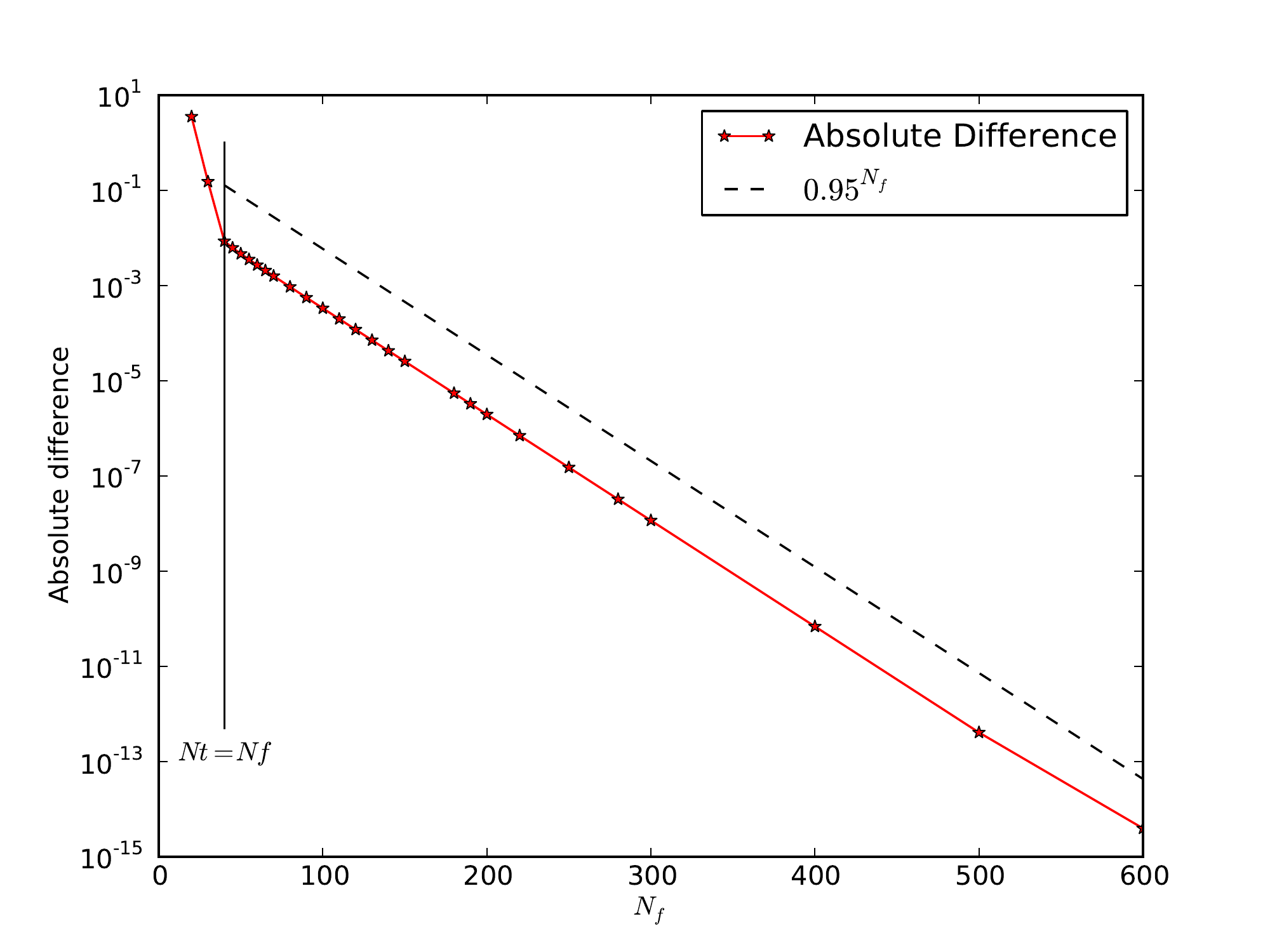}}
\caption{Two indirect combined integral formulations.}
\end{figure}

When $\eta=i$ and $\lambda=0.95$, there is a pole inside the contour and so Theorem \ref{thm:uconvergence} is no longer usable, but the solution still seems to converge when $N_f \to \infty$, as demonstrated in Figure \ref{figAbsDiff_etai}. However, the rate of convergence does not seem to be exponential.

\begin{figure}[h]
\centering \includegraphics[height=0.26\textheight]{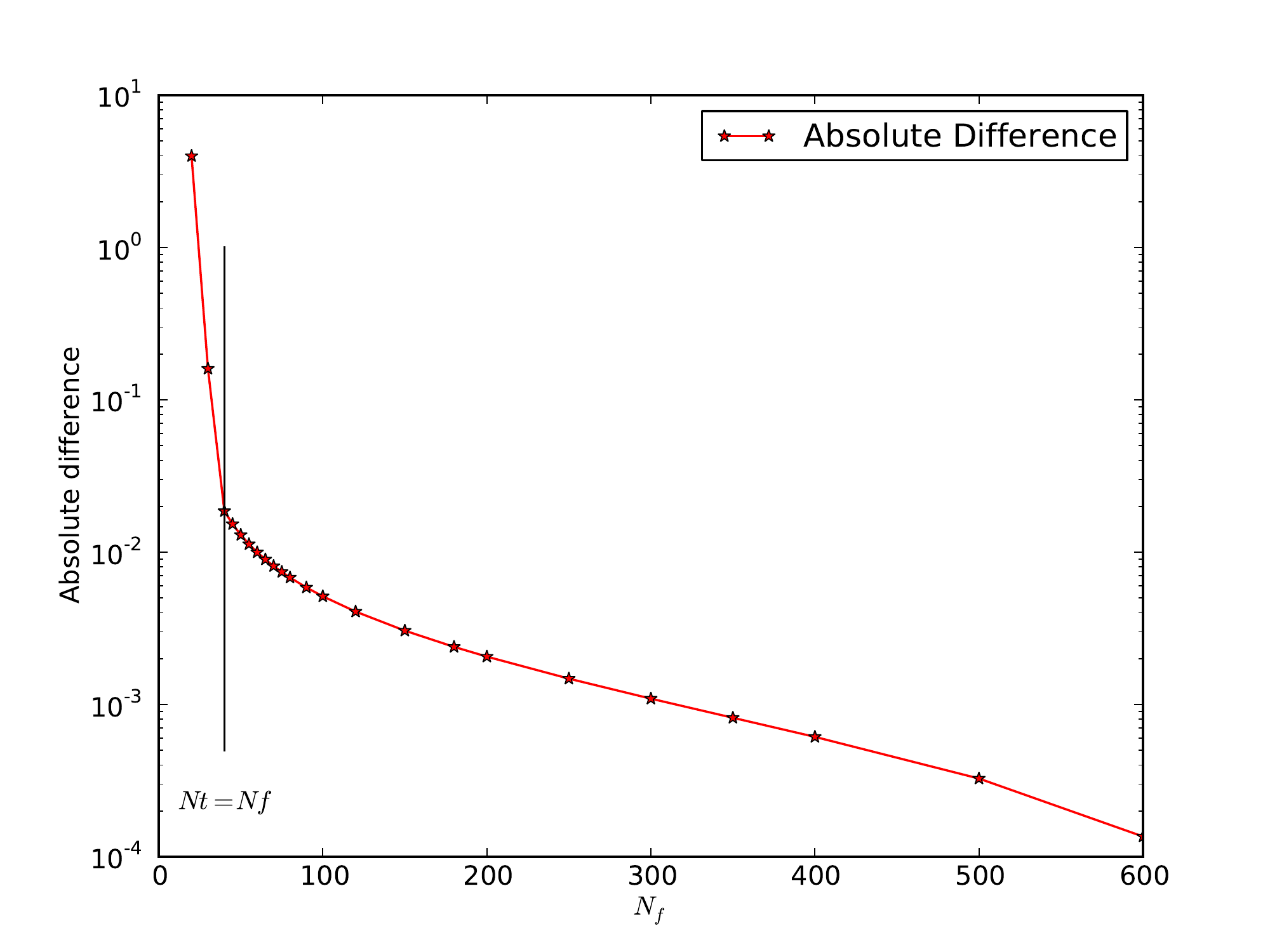}
\caption{Convergence of the numerical solution when a pole is inside the contour (case $\eta = i$ and $\lambda=0.95$).}
\label{figAbsDiff_etai}
\end{figure}

\subsection{Comparison of the rate of convergence for backward Euler and BDF-$2$}
It is interesting to compare backward Euler with BDF-2 as $N_f\rightarrow\infty$.  Figure \ref{figBDF12contour} depicts the contour for backward Euler and BDF-2. We observe that the pole is closer to the BDF-$2$ contour than to the backward Euler contour. Hence, the rate of convergence of the convolution quadrature approximations to the exact time-stepping values will be slower for BDF-2 than for backward Euler, unless the pole is at zero, in which case both rates of convergence are identical.

Figure \ref{figBDFcompare} confirms this by presenting the measured and the theoretical rate of convergence for these two schemes using a combined integral formulation with $\eta = 20$ and $\lambda = 0.9$. In the case of backward Euler we have $\lambda_U\approx 1.1318$ and for BDF-$2$ $\lambda_U\approx 1.0118$. We note, however, that BDF-$2$ is still a significantly more accurate scheme for the solution of the underlying wave equation, as it is second order in time, while backward Euler is only first order accurate.
\begin{figure}[h]
\subfigure[Contour used for backward Euler (black) and BDF-$2$ ({\color{blue}{blue}}).]{\label{figBDF12contour}
\centering \begin{tikzpicture}[scale=0.24]
\def\coef{5.83}
\draw[domain=0.0:6.283185, smooth,samples=140,variable=\x, black] plot ({(1-0.9*cos(deg(\x)))*\coef},{(-0.9*sin(deg(\x)))*\coef});
\draw[domain=0.0:6.283185, smooth,samples=230,variable=\x, blue] plot ({\coef*(1.5+0.5*(0.9*0.9*cos(deg(2.0*\x))*cos(deg(2.0*\x)))-0.5*(0.9*0.9*sin(deg(2.0*\x))*sin(deg(2.0*\x)))-2.0*0.9*cos(deg(2.0*\x)))},{\coef*(0.9*0.9*cos(deg(2.0*\x))*sin(deg(2.0*\x))-2*0.9*sin(deg(2.0*\x)))});
\draw[thick, ->] (-1,0) -- (25,0);
\draw[thick, ->] (0,-12) -- (0,12);
\draw (\coef,0) -- (0,3.0908);
\draw[blue] ({\coef*1.5},0) -- (0,3.0908);
\fill[red] (0,3.0908) circle (8pt);
\fill[black] (\coef,0) circle (7pt);
\fill[blue] ({\coef*1.5},0) circle (7pt);
\end{tikzpicture}} \quad
\subfigure[Absolute difference of the backward Euler and BDF-$2$ schemes.]{
\label{figBDFcompare}  \centering \includegraphics[height=0.26\textheight]{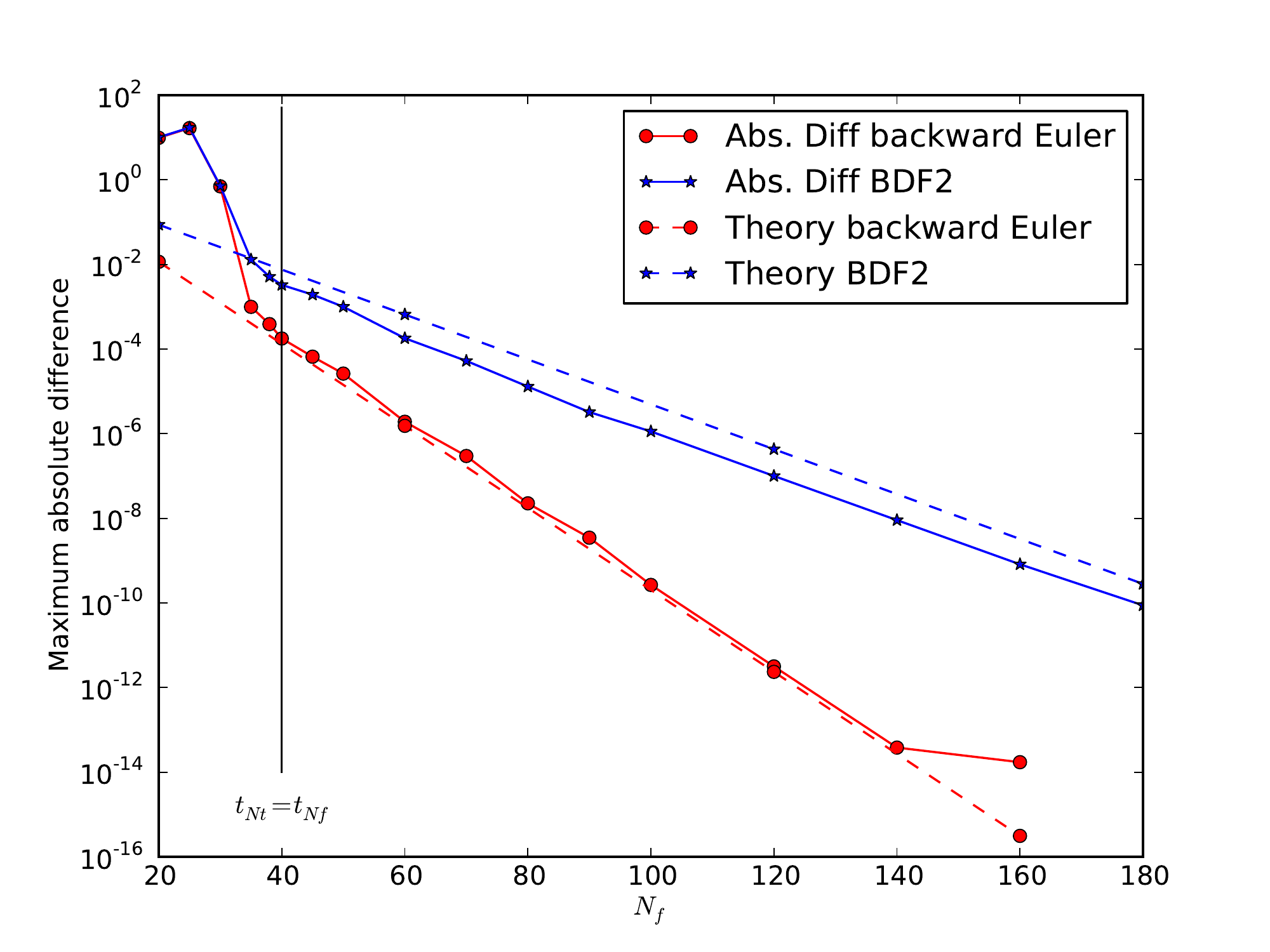}}
\caption{Comparisons between the backward Euler and BDF-$2$ schemes.}
\end{figure}

\subsection{Trapping domain}
Until now, we were studying  the solution of problem (\ref{eqPbNumeric}) when $\Omega$ is the unit sphere. We now consider the elliptic cavity shown in Figure \ref{figMeshCavity}. It is a three dimensional version of the elliptic cavity studied in \cite{Betcke:2010tt}.
For the two dimensional case it was shown in that paper that there exists a sequence of wavenumbers along the real axis for which the norm of the combined potential operator for the Helmholtz equation grows exponentially.

For the three dimensional elliptic cavity it is not possible to evaluate explicitly the poles of the solution operator. Denote by $\mathbb{A}(\omega)$ the matrix obtained from a Galerkin discretisation of the combined potential operator
$\left[ \frac{1}{2} I + K_{\omega} +\eta S_{\omega}\right] $ on the boundary $\Gamma$ of the trapping domain. Let $\mathbb{M}$ be the associated mass matrix and $\mathbb{M}=CC^H$ its Cholesky decomposition.
Then a simple way to have an idea of the location of the poles is to plot
\begin{align}
p(\omega) = \left\Vert  \mathbb{A}^{-1}(\omega) \right\Vert_{L^2(\Gamma)} = \left\Vert C^{-1} \mathbb{A}(\omega)^{-1} C^{-H} \right\Vert_2. \notag
\end{align}
If $z$ is a pole, then $p(\omega) \to \infty$ when $\omega \to z$.
We used $\eta = 1$ in order to have poles on the imaginary axis. Figure \ref{figPoleElliptic} shows $p(\omega)$ for $\omega \in \left[0, 4i\right]$ and allows to find the closest pole $z_1 \approx 1.7718i$, giving an estimated rate of convergence of $0.90896^{N_f}$ for backward Euler. The observed rate of convergence in \ref{figMeshCavityCV} matches very closely this predicted rate.
Figure  \ref{figTimeDomainSol} provides snapshots of the corresponding time-domain solution at four different time steps.

\begin{figure}
\centering\label{figMeshCavity}\includegraphics[width=0.49\textwidth]{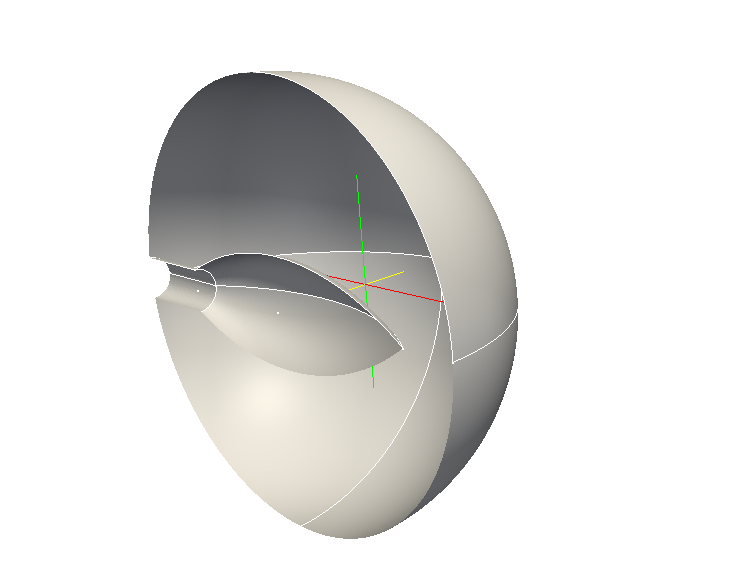}
\caption{Geometry of the trapping domain.}
\end{figure}

\begin{figure}
 \subfigure[$L^2$-norm of the inverse of the combined potential along the imaginary axis.]{\label{figPoleElliptic} \includegraphics[width=0.49\textwidth]{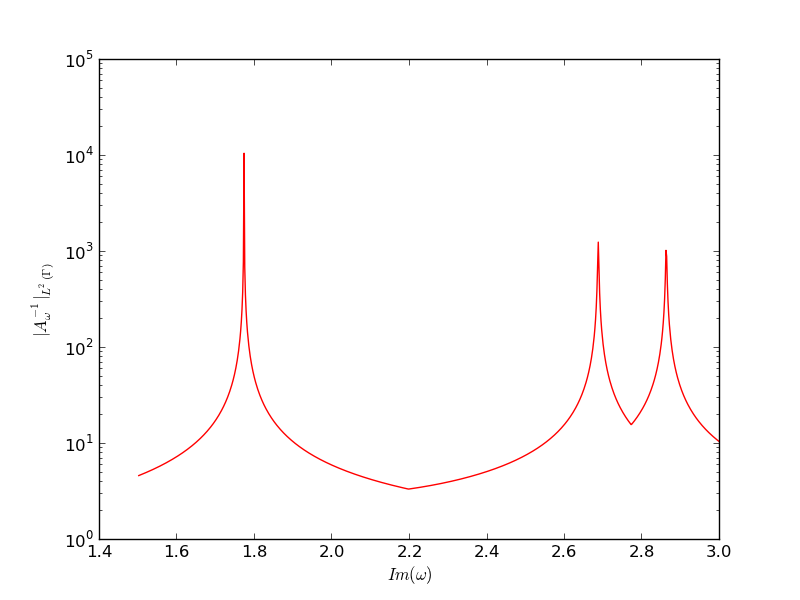}} \subfigure[Convergence of the solution for the elliptic cavity.]{\label{figMeshCavityCV}\includegraphics[width=0.49\textwidth]{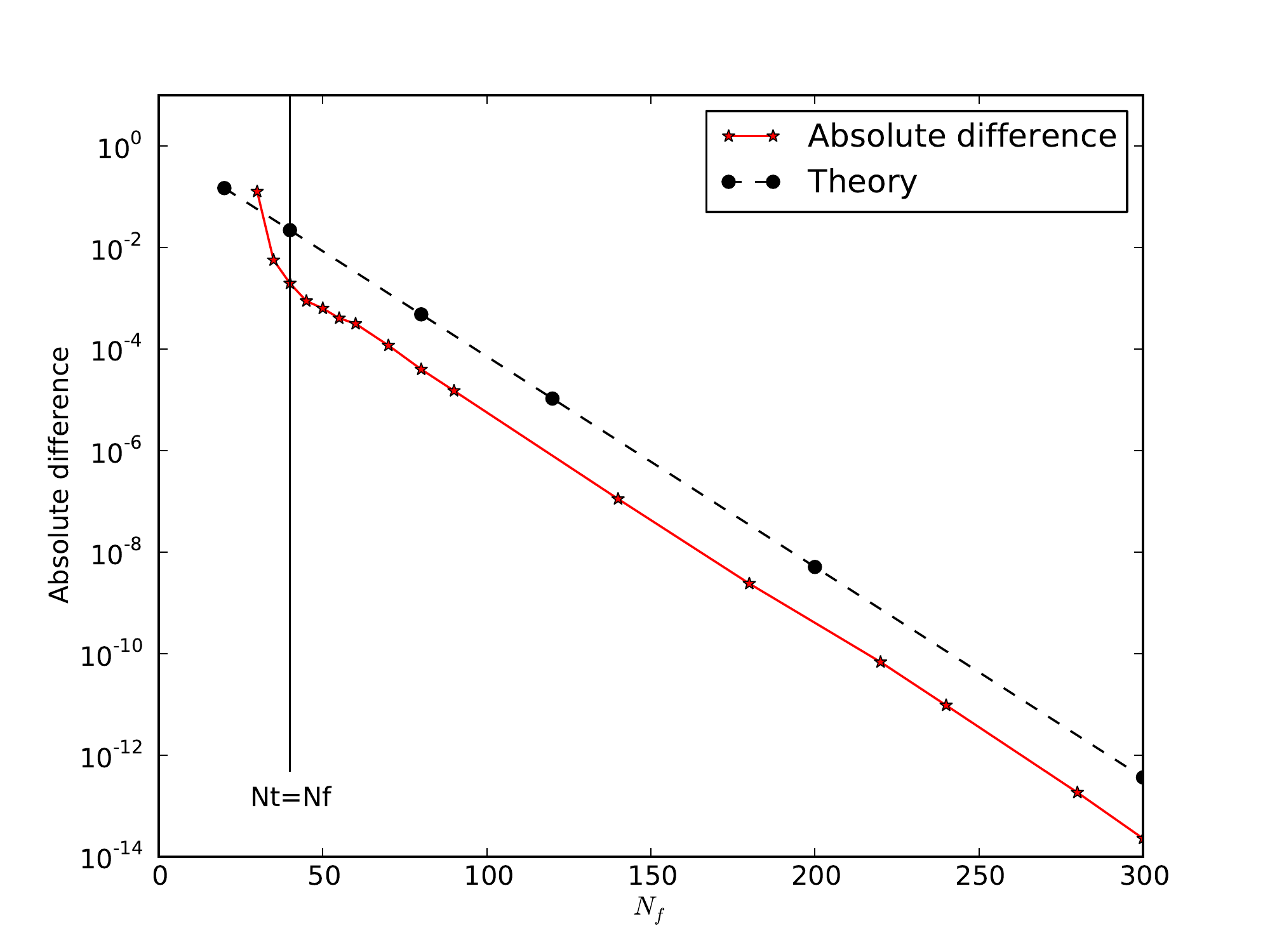} }
\caption{Trapping domain: Location of the nearest pole and absolute difference using an indirect combined formulation with $\eta=1$ and backward Euler. The closest pole is located at $1.7718i$ and therefore $\lambda_U \approx 1.045$, giving a predicted rate of convergence of $(\lambda/\lambda_U)^{N_f} \approx 0.90896^{N_f}$.}
\label{figTrapping}
\end{figure}
\begin{figure}[h]
\subfigure[Time-domain solution at time $0.0045$s.]{\label{figTimeDomain1}\includegraphics[width=0.49\textwidth]{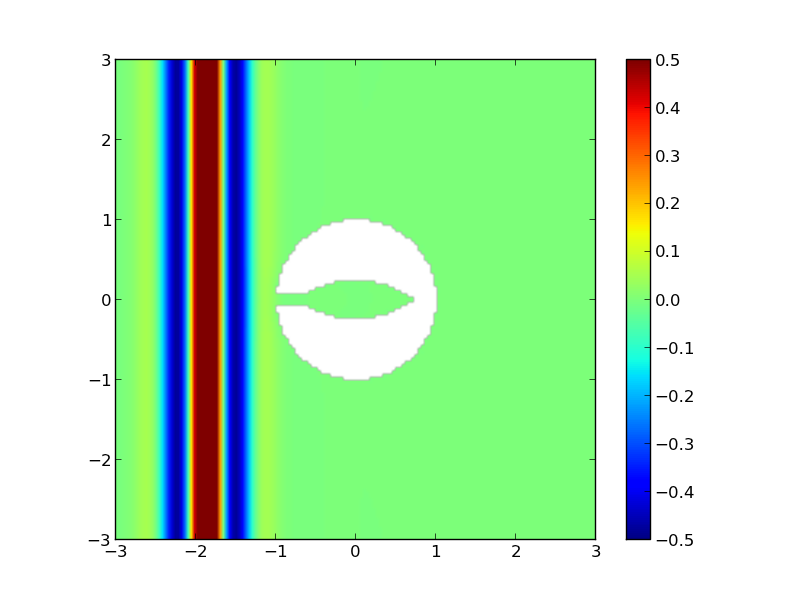}} \, \subfigure[Time-domain solution at time $0.0075$s.]{\label{figTimeDomain2} \includegraphics[width=0.49\textwidth]{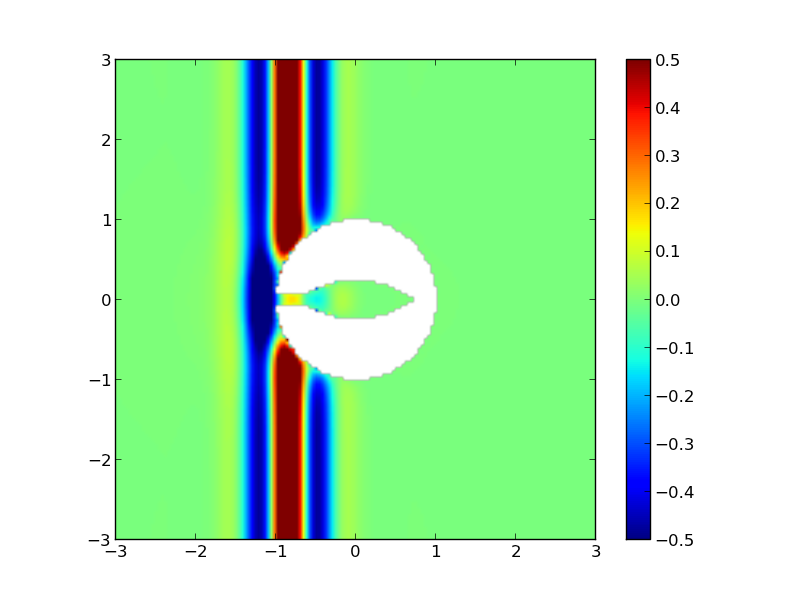}}\\[0.5pt]
\subfigure[Time-domain solution at time $0.01$s.]{\label{figTimeDomain3}\includegraphics[width=0.49\textwidth]{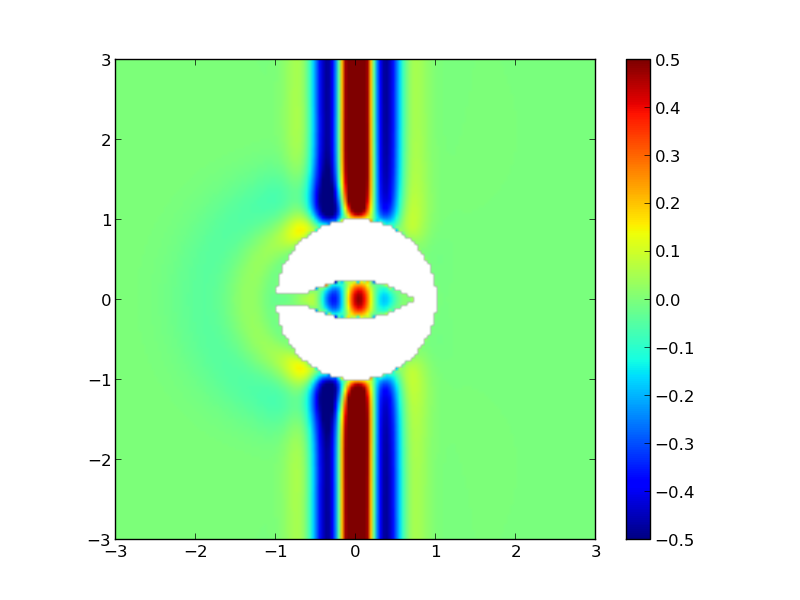}} \, \subfigure[Time-domain solution at time $0.0125$s.]{\label{figTimeDomain4} \includegraphics[width=0.49\textwidth]{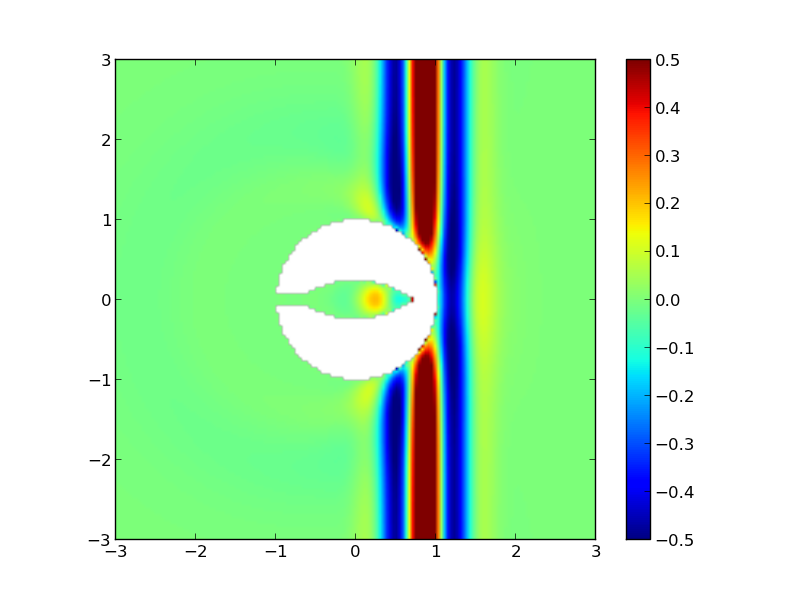}}\\[0.5pt]
\caption{Solution of the scattering by the sphere with the elliptic cavity with backward Euler (Figure \ref{figMeshCavity}) in the plane $z=0$ for different time steps with $\eta=1$ using $N_f = 300, N_t = 40, T_f = 20\cdot 10^{-3}$s and $\lambda=0.95$. }
\label{figTimeDomainSol}
\end{figure}

\subsection{Convergence of Runge-Kutta methods}
\label{sec:rkresults}
We solve problem \eqref{eqPbNumeric} with incident wave \eqref{eqdefgincident}, but this time using the Radau IIa Runge-Kutta method. We use a combined integral formulation with $\eta=\omega$. Hence, for the scalar problem zero is the closest pole. The interesting question is how the vector Helmholtz problem
\eqref{eqVectormodHelm} underlying the Radau IIa formulation influences the rate of convergence, and in particular whether the singularity of the eigenvalue decomposition of $\Delta(z)$ at $z=3\sqrt{3}-5$ is reflected in the observed convergence rate.  Figures \ref{figRKcfie090} and \ref{figRKcfie095} show the absolute difference between the numerical solutions obtained for different numbers of frequencies and a reference result computed with a large number of frequencies, respectively for $\lambda = 0.90$ and $\lambda = 0.95$. It is interesting to observe that the convergence consists of two phases: an initial phase with a significantly faster rate of convergence and then an asymptotic (at least to machine precision) behaviour that shows the same rate of convergence as we would expect for the corresponding scalar solution operator.
Hence, the singularity of the eigenvalue decomposition of $\Delta(z)$ at $z=3\sqrt{3}-5$ does not seem to influence the convergence behavior.
At a much smaller scale, the initial superconvergence behavior can also be observed for the multistep case in Figure \ref{figAbsDiff_etai}. Our current asymptotic analysis does not explain these transient phenomena.

\begin{figure}[h]
 \subfigure[Convergence for $\lambda = 0.90$.]{\label{figRKcfie090} \includegraphics[width=0.49\textwidth]{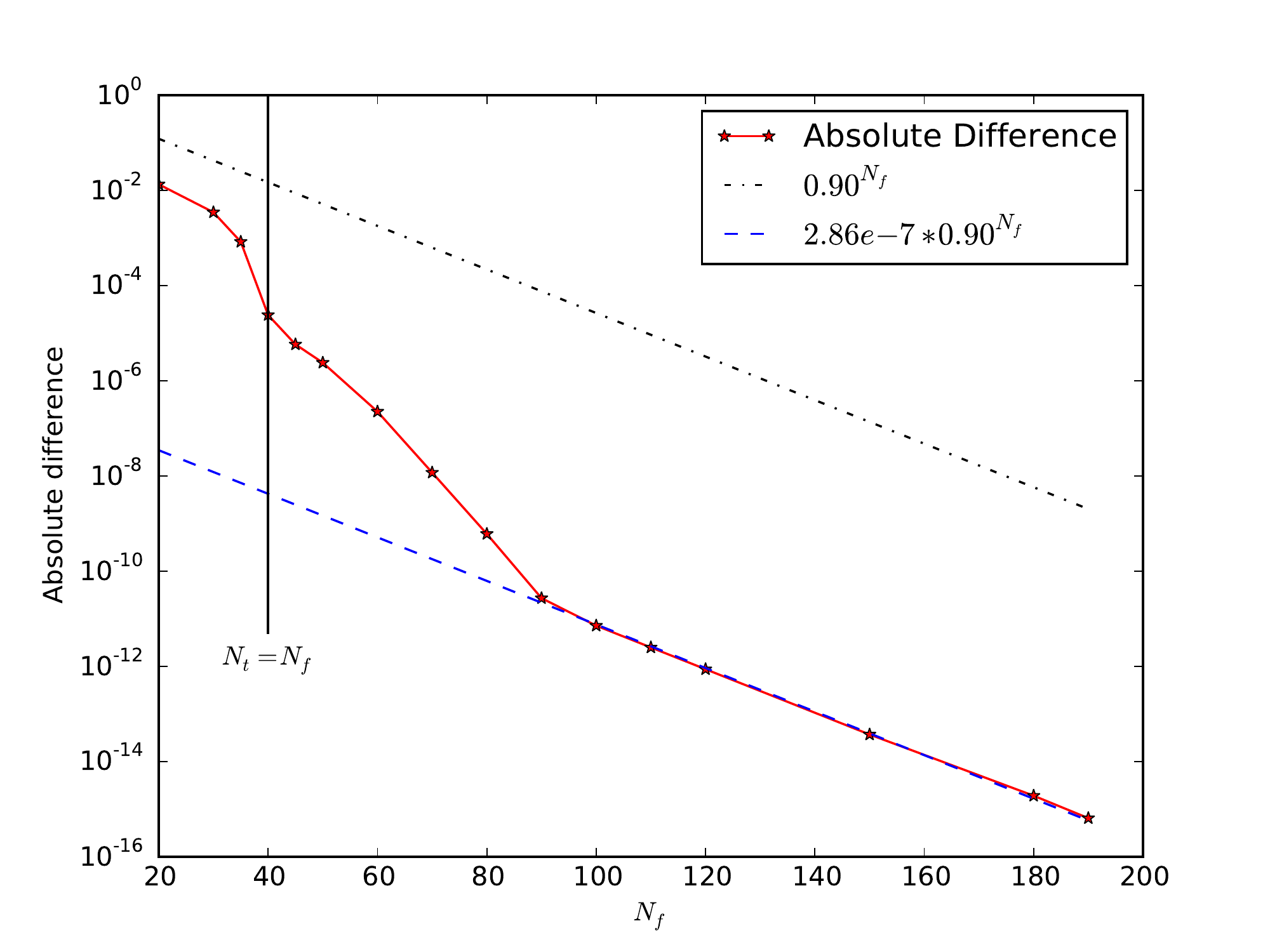}} \subfigure[Convergence for $\lambda = 0.95$.]{\label{figRKcfie095}\includegraphics[width=0.49\textwidth]{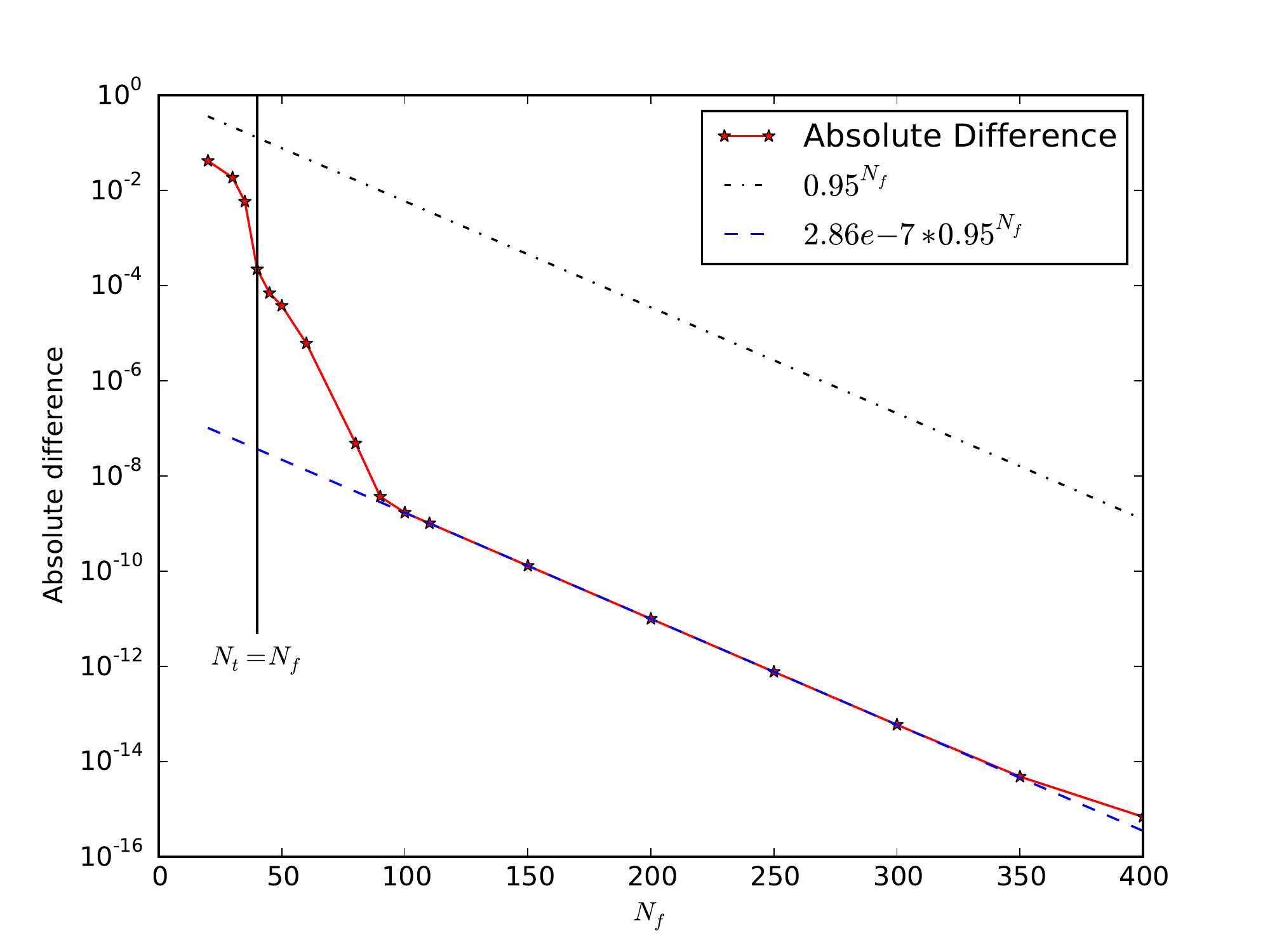} }
\caption{Convergence of the solution obtained with a $2$-stages Radau IIa Runge-Kutta scheme for the Combined integral formulation with $\eta = \omega$ for the scattering by the unit sphere for two different radii of the contour.}
\label{figRKconvergence}
\end{figure}

\subsection{Stability of the solution}
While previously we considered the rate of convergence of the convolution quadrature approximation $u_d^{N_f}$ to the exact time stepping solution $u_d$ for various boundary integral formulations, we want to conclude the numerical examples with a comparison of $u_d^{N_f}$ to the exact solution $u$ for different boundary integral formulations.

We use a boundary condition of the form
\begin{align}
f(t) = b (a t)^m e^{-p t}, \label{eq:condition_analytical}
\end{align}
to get the exact radiating solution
\begin{align}
u_e(r,t) =  b H\left(t + \frac{1-r}{c}\right) \left(a \left(t + \frac{1-r}{c}\right) \right)^m e^{-\frac{p \left(t + \frac{1-r}{c}\right)}{r}},
\end{align}
where $H$ denotes the Heaviside function. We use the $2$-stages Runge-Kutta Radau IIa scheme to discretize in time and $4$ different integral formulations:
\begin{itemize}
\item an indirect first kind integral formulation, see \eqref{eqIndFirstKind}, denoted $SL$,
\item a second kind integral formulation, see \eqref{eqIndSecKind}, denoted $DL$,
\item an indirect combined integral formulation, see \eqref{eqIndCombined}, with $\eta = 1$,
\item an indirect combined integral formulation, see \eqref{eqIndCombined}, with $\eta = \omega$, the wavenumber.
\end{itemize}

The numerical comparison is performed using the boundary condition \eqref{eq:condition_analytical} with $a = 25, b=300, m = 10$ and $p = 150$ and evaluating the solution at points located on a circle of radius $1.1$. The final time is $T_f = 0.30$. We use $N_t = 80$ time steps and $\lambda = 0.95$. To demonstrate the influence of the number of frequency solves on the number of time steps we performed the computation with $N_f=100$ and $N_f=400$ frequency solves.

In Figure \ref{figCompareIEform} we compare the error of the numerically computed solution to the exact solution for growing time $t$. It is remarkable that the two formulations with a pole at the origin (DL and $\eta = w$) deteriorate quickly while the two solutions with poles away from $0$ have a small relative error throughout the observed time interval. This behavior is independent on wheter we choose $N_f=100$ or $N_f=400$. The bottom plot shows as comparison the absolute error for $N_f=400$. It shows that as the analytical solution converges to zero the DL and $\eta=\omega$ case remain bounded away from zero. 

Figure \ref{figCompareIEform} also nicely demonstrates the influence of the error of the underlying time-stepping scheme. For $N_f=100$ the error of the $\eta=1$ formulation is larger than that of the SL formulation. However, for $N_f=400$ both errors are identical and indeed there is no difference in error for the SL formulation between $N_f=100$ and $N_f=400$. This means that already for $N_f=100$ frequencies the best possible error is achieved for the SL formulation, given the underlying time-stepping scheme. In contrast, for the $\eta=1$ scheme the convolution quadrature approximation introduces errors that are larger than the underlying time-stepping rule for $N_f=100$, while again for $N_f=400$ the error of the time-stepping scheme seems to dominate.

\begin{figure}[h]
\centering \includegraphics[width=0.5\textheight]{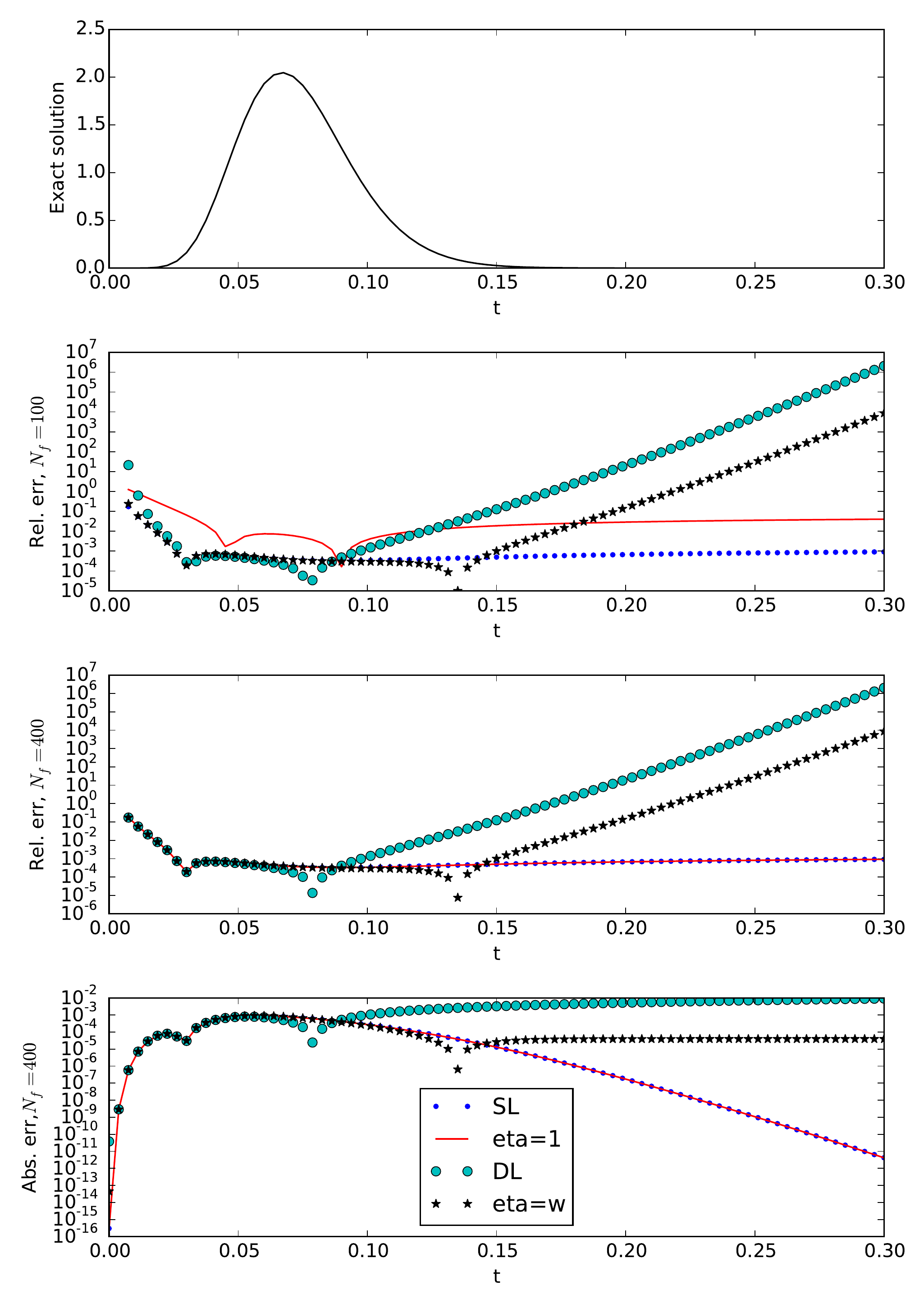}
\caption{Top: Analytic solution for $r=1.1$ and various time steps. Second figure: Relative error of various schemes for $N_f=100$ frequencies. Third Figure: The same as before but with $N_f=400$ frequencies. Bottom plot: Absolute error for $N_f=400$ frequencies.}
\label{figCompareIEform}
\end{figure}

\section{Conclusion}
\label{sec:conclusions}
Convolution quadrature methods have become a popular tool to solve
wave propagation problems in unbounded domains. In this paper we have
shown how the convergence of convolution quadrature methods depends on
the location of the poles of the underlying solution operator. It
therefore makes a significant difference whether we use a first kind,
second kind or combined integral equation formulation. The numerical
convergence results together with the comparison to the
analytical solution in Figure \ref{figCompareIEform} demonstrate the
importance of the location of the poles of the solution
operator. Indeed, the results in this paper are only a first step to
fully understand the influence of the poles of the frequency problems
on the numerical approximation of the time-domain solution.

An interesting aspect of these results is that although for a purely theoretical analysis only the scattering poles of the solution operators are relevant, in practice we need to reduce the exterior domain onto a problem on a finite domain, either by using a boundary integral formulation, or by introducing a PML layer. Both lead to additional poles that usually dominate the rate of convergence, as we have discussed in the case of a boundary integral equation formulation.

An import practical conclusion from the results in this paper is that it may be useful to overresolve in the frequency domain by computing more frequency solutions than there are time steps. This is important if the overall error is dominated not by the underlying time-stepping rule, but by the convolution quadrature approximation as depicted in the comparison of the error results for $N_f=100$ and $N_f=400$ in Figure \ref{figCompareIEform}.

Extensions of this current work to Maxwell problems are currently
under investigation. Finally, all results in this paper have been computed using the freely available boundary element library BEM++ (\url{www.bempp.org}), which provides a Python based interface to solve Laplace, Helmholtz and Maxwell boundary integral formulations. A time-domain toolbox for BEM++ is in planning.

\section*{Acknowledgements}
The authors would like to thank Lehel Banjai from Heriot-Watt University Edinburgh and Peter Monk and Francisco Javier-Sayas, both in Delaware, for introducing them to this topic and for many fruitful discussions on convolution quadrature methods and the results of this paper.

\appendix
\section{The 2-stages Runge-Kutta Radau IIa scheme}
\label{appendix_rungekutta}
A Runge-Kutta scheme can be described by its Butcher tableau of the form
\begin{align}
\begin{array}{c|cccc}
c_1    & a_{1,1} & a_{1,2}& \dots & a_{1,m}\\
c_2    & a_{2,1} & a_{2,2}& \dots & a_{2,m}\\
\vdots & \vdots & \vdots& \ddots& \vdots\\
c_m    & a_{m,1} & a_{m,2}& \dots & a_{m,m} \\
\hline
       & b_1    & b_2   & \dots & b_m
\end{array},
\end{align}
where $b,c \in \mathbb{R}^m$ and $A \in \mathbb{R}^{m \times m}$, with $m$ the number of stages, and by
\begin{align}
\Delta(z) = \left( A + \frac{z}{1 -z} \mathbbm{1} b^t\right)^{-1}
\end{align}
with $\mathbbm{1} = \left(1, \ldots, 1\right)^{t} \in \mathbb{R}^m$.
The $2$-stages Radau IIa scheme of order three is defined by the
tableau of the form
\begin{align}
\begin{array}{c|cc}
1/3  & 5/12  & -1/12  \\
1 & 3/4  &  1/4 \\
\hline
    & 3/4 & 1/4 \\
\end{array}.
\end{align}

We can diagonalize $\Delta(z)$ explicitly when $z \neq 3 \sqrt{3} - 5$:
\begin{align}
\Delta(z)  = \mathbb{P}(z) \mathbb{D}(z) \mathbb{P}^{-1}(z) \quad \text{ with } \mathbb{D}(z) = \mbox{diag}\left( \gamma_1(z), \gamma_2(z)\right) \notag
\end{align}
and
\begin{align}
\gamma_1(z) & = 2 + z - \sqrt{-2 + 10z + z^2} \notag \\
\gamma_2(z) & = 2 + z + \sqrt{-2 + 10z + z^2}
\end{align}
In case of higher order Radau IIa schemes it is not possible to obtain
an explicit diagonalization for each $z$ and the eigenvalues, and
eigenvectors need to be approximated numerically.



\bibliographystyle{siam}
\bibliography{cq_exponential2}
\end{document}